\newtheorem{thm}{Theorem}
\newtheorem{lem}[thm]{Lemma}
\theoremstyle{definition}
\newtheorem{defn}[thm]{Definition}
\theoremstyle{remark}
\newtheorem{rem}[thm]{Remark}
\newtheorem{prop}[thm]{Proposition}
\newtheorem{cor}[thm]{Corollary}
\newtheorem{remark}[thm]{Remark}
\theoremstyle{remark}
\newcommand{\norm}[1]{\left\Vert#1\right\Vert}
\newcommand{\RR}{\mathbb{R}}
\newcommand{\itemizeEqnVSpacing}{\rule{0pt}{1pt}\vspace*{-12pt}}
\newcommand{\incidences}{\mathbf I}
\newcommand{\dt}{(\delta,t)}
\newcommand{\WC}{\mathcal W}
\newcommand{\BC}{\mathcal B}
\newcommand{\WB}{(\mathcal W,\mathcal B)}
\newcommand{\Gammabar}{\textrm{\sout{$\phantom{.}$}$\!\!\Gamma$}}
\newcommand{\bdry}{\operatorname{bdry}}
\newcommand{\zar}{\operatorname{Zar}}
\newcommand{\rect}{\mathbf{R}}
\begin{document}
\title{On the Wolff Circular Maximal Function}
\author[J.~Zahl]{Joshua Zahl}%
\address{Department of Mathematics, UCLA, Los Angeles CA 90095-1555, USA}
\email{jzahl@math.ucla.edu}
\date{\today}
\subjclass[2000]{42B25}%
\begin{abstract}%
We prove sharp $L^3$ bounds for a variable coefficient generalization of the Wolff circular maximal function $M^\delta f$. For each fixed radius $r$, $M^\delta f(r)$ is the maximal average of $f$ over the $\delta$--neighborhood of a circle of radius $r$ and arbitrary center. In this paper, we consider maximal averages over families of curves satisfying the cinematic curvature condition, which was first introduced by Sogge to generalize the Bourgain circular maximal function. Our proof manages to avoid a key technical lemma in Wolff's original argument, and thus our arguments also yield a shorter proof of the boundedness of the (conventional) Wolff circular maximal function. At the heart of the proof is an induction argument that employs an efficient partitioning of $\mathbb{R}^3$ into cells using the discrete polynomial ham sandwich theorem.
\end{abstract}%

\maketitle
\section{Introduction}
In \cite{Wolff1}, Wolff considered the following maximal function:
\begin{equation}\label{WolffCircMaxmlFnCircles}
M^\delta f(r)=\sup_{x\in\RR^2}\frac{1}{|C^\delta(x,r)|}\int_{C^{\delta}(x,r)}|f(y)|dy,
\end{equation}
where $C^{\delta}(x,r)$ is the $\delta$--neighborhood of a circle centered at $x$ of radius $r$. This maximal function has the same relationship to Besicovich-Rado-Kinney (BRK) sets (compact subsets of the plane containing a circle of every radius $1/2\leq r\leq 1$) as the Kakeya maximal function has to Kakeya sets. In particular, a bound of the form
\begin{equation}\label{L3BoundCircles}
\norm{M^\delta f}_{L^p([1/2,1])}\leq C_\epsilon\delta^{-\epsilon}\norm{f}_{L^p(\RR^2)}
\end{equation}
for some value of $p$ and all $\epsilon>0$ would imply that every BRK set has Hausdorff dimension 2. See \cite{Wolff2} for further details. By considering the examples where $f$ is the characteristic function of a ball of radius $\delta$ and a rectangle of dimensions $\delta\times\sqrt\delta$, we can see that $p=3$ is the smallest value of $p$ for which \eqref{L3BoundCircles} can hold. In \cite{Wolff1}, Wolff proved \eqref{L3BoundCircles} for $p=3$.

In a similar vein, Wolff and Kolasa considered the more general class of maximal functions
\begin{equation}\label{CurveMaximalFunctionDefn}
M_\Phi^\delta f(r) = \sup_{x\in U_1}\frac{1}{|\Gammabar^\delta(x,r)|}\int_{\Gammabar^{\delta}(x,r)}|f(y)|dy.
\end{equation}
Here, $U_1$ is a sufficiently small neighborhood of a point $a\in\RR^2,$ and $\Gammabar^\delta(x,r)$ is the $\delta$--neighborhood of the curve
\begin{equation}
\Gammabar(x,r) = \{y\in U_2\colon \Phi(x,y)=r\},
\end{equation}
where $U_2$ is a sufficiently small neighborhood of a point $b\in\RR^2$ and
\begin{equation*}
\Phi\colon\RR^2\times\RR^2\to\RR
\end{equation*}
is a smooth function satisfying Sogge's \emph{cinematic curvature} conditions at the point $(a,b)$:
\begin{itemize}%
\item
\itemizeEqnVSpacing%
\begin{equation}\label{cinematicCurvatureGradientCondition}
\nabla_y\Phi(a,b)\neq 0.
\end{equation}
$\phantom{.}$
\item%
\itemizeEqnVSpacing%
\begin{equation}\label{cinematicCurvatureCondition}%
 \det\bigg(\nabla_x\left[\begin{array}{c}
e\cdot \nabla_y\Phi(x,y)\\
e\cdot \nabla_y\big(\frac{e\cdot
\nabla_y\Phi(x,y)}{|\nabla_y\Phi(x,y)|}\big)
\end{array}\right]\bigg|_{(x,y)=(a,b)}\bigg)\neq 0,
\end{equation}
where $e$ is a unit vector orthogonal to $\nabla_y\Phi(a,b)$.
\end{itemize}
See \cite{Sogge} for further discussion of cinematic curvature and its properties. Cinematic curvature was first introduced when studying the Bourgain circular maximal function (see e.g.~\cite{Bourgain2}), and it appears that replacing the circles $C(x,r)$ in \eqref{WolffCircMaxmlFnCircles} by families of curves satisfying the cinematic curvature condition is the most natural variable-coefficient generalization of the Wolff circular maximal function. In particular, geodesic circles for a Riemannian metric satisfy the cinematic curvature condition provided that the injectivity radius is larger than the diameter of the circles.

In \cite{Wolff3}, Wolff and Kolasa established the bound
\begin{equation}\label{WolffKolasaBoundEquation}
\norm{M_\Phi^\delta f}_{L^q([1/2,1])}\leq
C_{p,q}\delta^{-\frac{1}{2}(\frac{3}{p}-1)}\norm{f}_{L^p(\RR^2)},\ \ \
p<\frac{8}{3},\ q\leq 2p^\prime.
\end{equation}

In particular, \eqref{WolffKolasaBoundEquation} implies that any compact set containing a curve of the form $\{y\colon\Phi(x,y)=r\}$ for each $0<r<1$ must have Hausdorff dimension at least $11/6$. We shall call such sets \emph{Cinematic BRK sets}.

\subsection{New results}
In this paper, we prove the following theorem:
\begin{thm}\label{mainThm}
Let $\Phi$ satisfy the cinematic curvature conditions \eqref{cinematicCurvatureGradientCondition} and \eqref{cinematicCurvatureCondition}. Then for all $\epsilon>0$ there exists a constant $C_\epsilon$ such that
\begin{equation}
\norm{M_\Phi^\delta f}_{L^3([1/2,1])}\leq C_\epsilon\delta^{-\epsilon}\norm{f}_{L^3(\RR^2)}.
\end{equation}
In particular, every cinematic BRK set must have Hausdorff dimension 2.
\end{thm}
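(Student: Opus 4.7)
The plan is to reduce Theorem \ref{mainThm} to a discrete tangency-counting inequality for $\delta$-neighborhoods of cinematic curves, and to prove that inequality by induction on the number of curves, with the inductive step driven by polynomial partitioning of a three-dimensional parameter space. The starting point is the standard linearization machinery developed by Bourgain, Wolff, and Kolasa--Wolff: after pigeonholing radii and centers, the $L^3$ bound is equivalent, up to $\delta^\epsilon$ losses, to showing that a collection of $N$ cinematic curves $\gamma_1,\dots,\gamma_N$ represented by $\delta$-separated parameter points $p_1,\dots,p_N \in \RR^3$ admits at most $O_\epsilon(\delta^{-\epsilon} N^{3/2})$ pairs $(i,j)$ whose $\delta$-neighborhoods are $\delta$-tangent, i.e.\ meet in a region of area $\sim \delta^{3/2}$.

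To prove this tangency bound I would argue by induction on $N$. Apply the discrete polynomial ham sandwich theorem to the parameter points to obtain a polynomial $P$ of degree $D \sim N^{1/3}$ whose zero set $Z(P)$ partitions $\RR^3$ into $O(D^3)$ open cells, each containing $O(1)$ of the $p_i$. Classify each curve as \emph{cellular} (parameter in the interior of a cell) or \emph{boundary} (parameter within $\delta$ of $Z(P)$). A $\delta$-tangency between two cellular curves living in distinct cells corresponds to two parameter points that straddle $Z(P)$, and can be reassigned to the boundary analysis below; tangencies among cellular curves in the same cell are handled by the induction hypothesis applied within the cell after rescaling and collecting the contributions.

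The remaining task, and the crux of the proof, is to control $\delta$-tangencies among the boundary curves. Their parameters lie in a $\delta$-neighborhood of the two-dimensional algebraic variety $Z(P)$, so combining the cinematic curvature condition \eqref{cinematicCurvatureCondition} with B\'ezout-type bounds on the algebraic degree of the tangency locus should yield a bound of the form $O(D^c N)$ on the number of tangencies involving at least one boundary curve, for some bounded $c$. Choosing $D \sim N^{1/3}$ and iterating closes the induction with the target exponent $3/2$, absorbing the polynomial factors in $D$ into the $\delta^{-\epsilon}$.

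The main obstacle will be the boundary analysis. In Wolff's original proof of \eqref{L3BoundCircles}, an analogous role was played by a delicate geometric tangency lemma bounding the number of circles simultaneously $\delta$-tangent to a fixed circle; this lemma, together with its generalization by Kolasa--Wolff to cinematic curves, was the most intricate ingredient of the argument. The approach sketched here aims to replace it by an algebraic incidence argument on $Z(P)$. The key technical point, and likely the hardest part to make rigorous, will be to verify that the cinematic curvature hypothesis survives restriction to $Z(P)$ in a quantitative way uniform in the degree $D$, so that the boundary contribution can be absorbed into the induction without logarithmic or $D$-dependent losses that would destroy the $\delta^\epsilon$ gain.
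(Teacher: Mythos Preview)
Your proposal has the right instinct---polynomial partitioning of the three-dimensional parameter space plus induction---but the cellular step contains a structural error and a key preliminary reduction is missing.

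The error is in your treatment of cross-cell tangencies. You assert that a $\delta$-tangency between cellular curves in distinct cells ``corresponds to two parameter points that straddle $Z(P)$'' and can therefore be absorbed into the boundary analysis. This is false: tangency of two curves is not a proximity condition on their parameter points. Two circles can be internally tangent while their $(x,r)$ parameters are far apart, each deep inside its own cell and nowhere near $Z(P)$. The paper handles this by working in a bipartite setting $(\mathcal W,\mathcal B)$ and, for each $\Gamma\in\mathcal B$, constructing a semi-algebraic ``tangency region'' $Q(\Gamma)\subset\RR^3$ with the property that any $\tilde\Gamma$ almost tangent to $\Gamma$ must have its parameter in $Q(\Gamma)$. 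A Thom--Milnor bound then controls the number of cells that $\partial Q(\Gamma)$ can enter, namely $O(k^C D^2)$ where $k$ is the complexity of $Q(\Gamma)$. With this mechanism in place the paper uses a \emph{low-degree} partition ($D$ of size roughly $n^{\epsilon/6}$, thought of as $\delta^{-\epsilon}$) so that each cell still contains many points of $\mathcal W$ and the induction hypothesis can be applied inside every cell with a gain. Your high-degree choice $D\sim N^{1/3}$ is incompatible with the later sentence ``absorbing the polynomial factors in $D$ into the $\delta^{-\epsilon}$,'' since $N$ can be as large as $\delta^{-1}$.

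The missing reduction is the replacement of the smooth $\Phi$ by an algebraic approximant. For $Q(\Gamma)$ to be semi-algebraic of controlled complexity---so that any B\'ezout or Thom--Milnor argument applies at all---the defining function must be a polynomial. The paper achieves this via Jackson's theorem, approximating $\Phi$ in $C^2$ to accuracy $\delta/A$ by a polynomial $\Psi$ of degree $k=O(\delta^{-1/K})$ with $K$ arbitrarily large; the resulting factor $k^{C_\epsilon}$ in the incidence estimate is then swallowed by the $\delta^{-\epsilon}$ budget. Without this step, your appeal to ``B\'ezout-type bounds on the algebraic degree of the tangency locus'' has no content for a general smooth $\Phi$, and your hope that ``the cinematic curvature hypothesis survives restriction to $Z(P)$ in a quantitative way uniform in the degree $D$'' cannot even be formulated.
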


\begin{cor}\label{CircularMaxmlFnBdCor}
Equation \eqref{L3BoundCircles} holds with $p=3$.
\end{cor}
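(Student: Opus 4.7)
The plan is to deduce Corollary \ref{CircularMaxmlFnBdCor} from Theorem \ref{mainThm} by specializing to $\Phi(x, y) = |x - y|$ and adding a covering/localization step that exploits the translation invariance of the Euclidean distance. First I would verify that $\Phi(x, y) = |x - y|$ satisfies both cinematic curvature conditions at every pair $(a, b)$ with $a \neq b$. Condition \eqref{cinematicCurvatureGradientCondition} is immediate since $\nabla_y \Phi = (y - x)/|y - x|$ is a unit vector whenever $y \neq x$. For \eqref{cinematicCurvatureCondition}, the identity $|\nabla_y \Phi| \equiv 1$ makes the denominator inside the outer gradient constant, and a direct computation reduces the determinant in question to that of the $2 \times 2$ matrix with rows $-e/r_0$ and $\hat n/r_0^2$, where $r_0 = |a - b|$ and $\hat n = (b - a)/r_0$. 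Since $e$ and $\hat n$ are orthogonal unit vectors, this determinant equals $\pm 1/r_0^3 \neq 0$.

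Next I would localize in $r$ and $y$. I would cover $[1/2, 1]$ by finitely many short intervals $J_k$ centered at radii $r_k$, and for each $k$ fix a reference center $a_k \in \RR^2$, a small neighborhood $U_{1, k}$ of $a_k$, and finitely many small neighborhoods $U_{2, k, j}$ whose union covers the annulus $\{y : r_k - 2\eta \leq |y - a_k| \leq r_k + 2\eta\}$ for an appropriate $\eta > 0$. By the cinematic curvature check above, Theorem \ref{mainThm} bounds each of the corresponding localized cinematic maximal functions $M^\delta_{\Phi, k, j}$ (defined using the parameters $(U_{1, k}, U_{2, k, j})$). Writing the average of $|f|$ over $C^\delta(x, r)$ as a convex combination of averages over the $O(1)$ arcs $\{y \in U_{2, k, j} : |x - y| = r\}$ yields
\[\sup_{x \in U_{1, k}} \frac{1}{|C^\delta(x, r)|} \int_{C^\delta(x, r)} |f(y)|\, dy \leq \sum_j M^\delta_{\Phi, k, j} f(r), \quad r \in J_k.\]

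Finally I would globalize in $x$ using the translation invariance of $\Phi$. Choose a lattice $\Lambda \subset \RR^2$ whose translates of $U_{1, k}$ cover $\RR^2$, and set $(T_h g)(y) := g(y - h)$. Then
\[M^\delta f(r) \leq \sup_{h \in \Lambda} \sum_j M^\delta_{\Phi, k, j}(T_{-h} f)(r), \quad r \in J_k.\]
Bounding $\sup_{h \in \Lambda}$ by an $\ell^3$ sum, integrating in $r$, and applying Theorem \ref{mainThm} to each translate, while using that $M^\delta_{\Phi, k, j} g$ depends only on $g$ restricted to a slight enlargement $\widetilde U_{2, k, j}$ of $U_{2, k, j}$, yields
\[\norm{M^\delta f}_{L^3(J_k)}^3 \leq C_\epsilon \delta^{3\epsilon} \sum_{h \in \Lambda} \sum_j \norm{f \cdot \chi_{h + \widetilde U_{2, k, j}}}_{L^3(\RR^2)}^3.\]
The right-hand side collapses to $C_\epsilon' \delta^{3\epsilon} \norm{f}_{L^3(\RR^2)}^3$ by bounded overlap of $\{h + \widetilde U_{2, k, j}\}_{h \in \Lambda}$, and summing over the finitely many $k$ completes the argument.

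The main obstacle I anticipate is ensuring that the final constant does not depend on the support of $f$; a naive covering of $x$-space would lose a factor comparable to the diameter of that support. The saving is precisely that $M^\delta_{\Phi, k, j}(T_{-h} f)$ depends only on $f$ restricted to the unit-scale set $\widetilde U_{2, k, j} + h$, so the lattice sum has uniformly bounded overlap and telescopes to a constant multiple of $\norm{f}_{L^3(\RR^2)}^3$.
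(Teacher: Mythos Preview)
Your proposal is correct. The paper does not supply a separate proof of the corollary at all: it is stated immediately after Theorem~\ref{mainThm} with the understanding that $\Phi(x,y)=|x-y|$ satisfies \eqref{cinematicCurvatureGradientCondition} and \eqref{cinematicCurvatureCondition}, and the passage from the localized maximal operator $M_\Phi^\delta$ (with $x\in U_1$, curves in $U_2$) to the global operator $M^\delta$ of \eqref{WolffCircMaxmlFnCircles} is treated as routine---the paper only remarks in Section~2.1 that ``standard compactness arguments'' recover the full $L^p([1/2,1])$ bound from the restricted version.

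What you have done is make that routine step explicit, and in fact with more care than a pure compactness argument would require: you use translation invariance and the $\ell^\infty\leq\ell^3$ embedding over the lattice $\Lambda$, together with the observation that $M^\delta_{\Phi,k,j}(T_{-h}f)$ depends only on $f$ restricted to a unit-scale set, so that the bounded overlap of $\{h+\widetilde U_{2,k,j}\}_{h\in\Lambda}$ collapses the a priori infinite sum back to $\norm{f}_{L^3}^3$. This correctly handles the obstacle you flag (dependence on $\operatorname{supp} f$) and is exactly the sort of argument the paper is suppressing. One cosmetic point: the full-circle average is not literally a convex combination of the arc averages, but since the $O(1)$ arcs have comparable length it is bounded by a constant times their maximum, hence by their sum, which is all you use.
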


\begin{rem}
 While Corollary \ref{CircularMaxmlFnBdCor} was already known (indeed, it was proved by Wolff), our proof avoids some technical lemmas from Wolff's proof and thus our proof is shorter. If one is interested only in the (original) Wolff circular maximal function, the current shortest proof is obtained through the following steps: the $L^3$--boundedness of the Wolff circular maximal function is established in \cite[\S4]{Schlag}, provided a certain hypothesis is met. This hypothesis is established in this paper, using Lemmas 1.8 and 1.10 from \cite{Wolff4}.
\end{rem}

Theorem \ref{mainThm} improves upon a previous result of the author in \cite{Zahl} in which a similar statement is proved under the additional restriction that the function $\Phi$ be algebraic. We follow a similar proof strategy in this paper as in \cite{Zahl}, but at a key step we use the discrete polynomial ham sandwich theorem rather than the vertical algebraic decomposition.

\subsection{Proof sketch}
Through standard techniques, it suffices to obtain certain weak-type bounds on $\sum \chi_{\Gammabar^\delta}$ for a collection of curves $\{\Gammabar\}$ with $\delta$--separated ``radii.'' The main difficulty arises when many pairs of curves are almost tangent, and indeed a result due to Schlag in \cite{Schlag} shows that we can obtain the desired bounds on $M_\Phi$ if we can control the number of such almost-tangencies. More specifically, if $\WC$ and $\BC$ are collections of curves such that all curves in $\WC$ (resp.~$\BC$) are close to each other in a suitable parameter space, and all curves in $\WC$ are far from curves in $\BC$ (again in a suitable parameter space), then we need to control the number of near-tangencies between curves in $\WC$ and curves in $\BC$. We shall do this with an induction argument.

First, we shall use Jackson's theorem to replace the curves $\{\Gammabar\}$ by algebraic curves that closely approximate them. The degree of the algebraic curves will depend on $\delta$, but the dependence is mild enough to be controllable. We will then identify the curves in $\WC$ with points in $\RR^3$ (if the curves were actually circles, we could use the center and radius of the circle to perform this identification). We then use the discrete polynomial ham sandwich theorem to find a low degree trivariate polynomial $P$ whose zero set partitions $\RR^3$ into open ``cells,'' such that the points are evenly split up amongst the cells. To each curve $\Gammabar\in\BC$ we associate a semi-algebraic set  $Q(\Gammabar)\subset\RR^3$ (of controlled degree), such that if $\Gammabar\in\BC$ is almost tangent to $\tilde\Gammabar\in\WC$, then $Q(\Gammabar)$ must intersect the cell containing (the point associated with) $\tilde\Gammabar$. The bounds on the degree of $P$ and $Q(\Gammabar)$ yield bounds on the number of 
cells that $Q(\Gammabar)$ can intersect. We then apply the induction hypothesis within each cell. Summing over all cells, we obtain the desired bound on the total number of almost-tangencies between curves in $\WC$ and $\BC$.

The key innovation is the use of the discrete polynomial ham sandwich theorem. While the partition of $\RR^3$ described above could be done with the vertical algebraic decomposition instead of the polynomial ham sandwich theorem, the resulting control on the number of cells that $Q(\Gammabar)$ can intersect is so poor that we cannot run the induction argument except in the special case where the defining function $\Phi$ is algebraic (and thus the algebraic curves $\Gammabar$ have degree that does not depend on $\delta$).

The use of the polynomial ham sandwich theorem to solve a non-discrete problem in harmonic analysis might be of interest to readers because to the best of the author's knowledge, the work of Guth \cite{Guth} and Bourgain-Guth \cite{Bourgain} are the only other examples where the polynomial ham sandwich theorem is used to solve a problem of this type.

\subsection{Thanks}
The author would like to thank Javier P\'erez and Terence Tao for pointing out typos in an earlier version of this manuscript. The author was supported in part by the Department of Defense through the National Defense Science \& Engineering Graduate Fellowship (NDSEG) Program.

\section{Proof of Theorem \ref{mainThm}}
\subsection{Initial reductions}
The first step will be to replace the defining function $\Phi$ by an algebraic approximation. This idea was suggested to the author by Larry Guth, and it appears in a similar form in \cite{Bourgain}. Throughout the proof, we shall assume that $\Phi$ satisfies the cinematic curvature conditions at the point $(a,b)=(0,0)$ and that $U_1,U_2$ are small balls centered at 0. By Jackson's theorem (see e.g.~\cite{Bagby}), for each $K>0$, $A>0$, and $\delta>0$, we can find a polynomial $\Psi(x,y)\colon\RR^2\times\RR^2\to\RR$ such that
\begin{equation}\label{degreeOfPsi}
\deg\Psi\leq C_K \delta^{-1/K},
\end{equation}
\begin{equation}\label{howCloseArePhiAndPsi}
\norm{\Psi-\Phi}_{C^2(B(0,100))}<\delta/A,
\end{equation}
where $C_K$ depends on $K,$ $A$, and $\Phi$. If $A$ is chosen sufficiently large depending on the infimum of the quantities in \eqref{cinematicCurvatureGradientCondition} and \eqref{cinematicCurvatureCondition},
then $\Psi$ satisfies \eqref{cinematicCurvatureGradientCondition} and \eqref{cinematicCurvatureCondition}.

Since $\norm{\nabla_y\Phi}$ and $\norm{\nabla_y\Psi}$ are bounded from below for $y\in U_1$ (after possibly shrinking $U_1$), we have that if $A$ is chosen sufficiently large in \eqref{howCloseArePhiAndPsi} then for each $x_0\in U_2$ and $1/2 < r_0 <1$ we have that $\{y\in U_1\colon \Phi(x_0,y)=r_0\}$ and $\{y\in U_1\colon \Psi(x_0,y)=r_0\}$ are contained in $\delta/100$ neighborhoods of each other. Thus if $f$ is supported in $B(0,1)$ then $M_\Phi^\delta f \sim M_\Psi^\delta f$, so it suffices to obtain bounds on $M_\Psi^\delta f.$

\begin{rem}
 If the reader is only interested in the original Wolff circular maximal function, then this step can be omitted, and every instance of $\Psi$ can be replaced by $\Phi(x,y) = \norm{x-y}$. In this case, $\Psi$--circles are arcs of genuine circles. Throughout the proof, we shall refer to this situation as the ``circles'' case.
\end{rem}

Fix $\alpha>0$ sufficiently small depending on the quantities appearing in \eqref{cinematicCurvatureGradientCondition} and \eqref{cinematicCurvatureCondition} and on $\norm{\Phi}_{C^3(B(0,100))}$. For $x\in B(0,\alpha)$ and $r\in [1/2,1]$, we define
\begin{equation}\label{gammaDefn}
\Gamma(x_0,r_0)=\{y\in B(0,\alpha)\colon\Psi(x_0,y)=r_0\}.
\end{equation}
We shall call these sets \emph{$\Psi$--circles}, and if $\Gamma$ is a $\Psi$--circle then $\Gamma^\delta$ will denote its $\delta$--neighborhood. If $\Gamma,\tilde\Gamma,$ etc.~are $\Psi$--circles, then unless otherwise noted, $x_0,r_0$ and $\tilde x_0,\tilde r_0$ will refer to their respective centers and radii. The $\Psi$--circles defined here are strict subsets of the analogous sets $\Gammabar$ defined in the introduction. However, if the function $f$ is supported on a sufficiently small neighborhood of the origin then we can define a maximal function analogous to \eqref{CurveMaximalFunctionDefn} with $\Gamma$ in place of $\Gammabar$, and the two maximal functions will agree. Thus we shall henceforth work with curves $\Gamma$ defined by \eqref{gammaDefn}.

We shall restrict our attention to those $\Psi$--circles $\Gamma$ with $x_0\in B(0,\alpha),\ r_0\in(1-\tau,1)$ for $\tau$ a sufficiently small constant which depends only on the quantities appearing in \eqref{cinematicCurvatureGradientCondition} and \eqref{cinematicCurvatureCondition} and on $\norm{\Phi}_{C^2(B(0,100))}$. By standard compactness arguments, we can recover $L^p([1/2,1])$ bounds on $M_\Psi$ from those on the ``restricted'' version of $M_\Psi$ by considering the supremum over a finite number of scaled versions of the function.

Using standard reductions (see e.g.~\cite{Schlag}, \S4), in order to prove Theorem \ref{mainThm} it suffices to prove the following lemma:
\begin{lem}\label{quantitativeMaximalFunctionBound}
For $\eta>0$ and $\delta$ sufficiently small depending on $\eta$, let $\mathcal A$ be a collection of $\Psi$--circles with $\delta$--separated radii, with each radius lying in $(1-\tau,1)$. Then there exists $\tilde{\mathcal A}\subset\mathcal A$ with $\#\tilde{\mathcal A}\geq\frac{1}{C}\# \mathcal A$ such that for all $\Gamma\in\tilde{\mathcal A}$ and $\delta<\lambda<1$,
\begin{equation}
\Big|B(0,\alpha)\cap\{y\in \Gamma^\delta\colon\ \sum_{\tilde\Gamma\in\mathcal A}\chi_{\tilde\Gamma^\delta}(y) >\delta^{-\eta}\lambda^{-2}\}\Big|\leq \lambda|\Gamma^\delta|.
\end{equation}
\end{lem}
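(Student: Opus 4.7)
The plan is to reduce the lemma, via the arguments in \S4 of \cite{Schlag}, to a combinatorial tangency-counting estimate, and then to prove that estimate by induction on the number of curves, using the discrete polynomial ham sandwich theorem as the key partitioning tool.

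\emph{Reduction to counting near-tangencies.} Following \cite{Schlag}, the desired weak-type bound follows once one controls, for each dyadic tangency parameter $t \in [\delta, 1]$, the number of $\dt$--tangent pairs $(\Gamma, \tilde\Gamma) \in \WC \times \BC$, where $\WC$ and $\BC$ are two collections of $\Psi$--circles that are close in the plane but well-separated in the three-dimensional parameter space (so that the $\delta$--neighborhoods of curves in opposite collections can only overlap in short arcs). After pigeonholing in $t$, the target is a bound on this count of the form $C_\eta \delta^{-\eta}$ times Wolff's tangency expression in $\#\WC$, $\#\BC$ and $t$. The refinement $\tilde{\mathcal A}\subset\mathcal A$ in the lemma then arises by discarding the small fraction of curves that participate in too many near-tangencies, which is a standard move once the pair-counting estimate is known.

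\emph{Induction via polynomial ham sandwich.} The tangency bound I would prove by induction on $\#\WC + \#\BC$. Identify each $\Gamma \in \WC$ with its parameter point $(x_0, r_0) \in \RR^3$. By the discrete polynomial ham sandwich theorem, there is a polynomial $P$ of degree $D$ (to be optimized) whose zero set $Z(P)$ partitions $\RR^3 \setminus Z(P)$ into $O(D^3)$ open cells, each containing at most $O(\#\WC/D^3)$ of the parameter points. For each $\tilde\Gamma \in \BC$, define $Q(\tilde\Gamma) \subset \RR^3$ to be the semi-algebraic set of parameters $(x_0, r_0)$ such that $\Gamma(x_0, r_0)$ is $\dt$--tangent to $\tilde\Gamma$; since $\Psi$ has degree $O(\delta^{-1/K})$, the set $Q(\tilde\Gamma)$ is cut out by polynomial inequalities of degree $O(\delta^{-O(1/K)})$. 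A Milnor--Thom type bound then shows that $Q(\tilde\Gamma)$ meets at most $O(D^2\delta^{-O(1/K)})$ cells. Inside each cell $\Omega$, apply the induction hypothesis to $\WC_\Omega$ and $\{\tilde\Gamma \in \BC : Q(\tilde\Gamma) \cap \Omega \neq \emptyset\}$; summing over cells and choosing $D$ as a small power of $\#\WC$ closes the recursion.

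\emph{Parameter points on the zero set, and the main obstacle.} Curves whose parameter lies on $Z(P)$ must be handled separately; since $Z(P)$ is a two-dimensional variety of degree $D$, their parameters sit in a lower-dimensional structure, and their contribution can be estimated by a codimension reduction (or by reapplying the ham sandwich theorem on $Z(P)$). The principal difficulty is balancing the two polynomial degrees in play: each level of the recursion pays a factor $\delta^{-O(1/K)}$ from $\deg\Psi$, and the recursion runs through $O(\log\#\WC)$ levels. Choosing $K$ large enough depending on $\eta$ (using the flexibility in \eqref{degreeOfPsi}) keeps the cumulative loss below the target $\delta^{-\eta}$. This balancing is precisely what the polynomial ham sandwich theorem makes possible: the vertical algebraic decomposition used in \cite{Zahl} would give a much worse dependence on $\deg\Psi$ and forces that earlier argument to assume $\Phi$ is itself algebraic.
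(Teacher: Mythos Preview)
Your high-level strategy matches the paper's: reduce via Schlag to a bipartite tangency/rectangle count, identify $\WC$ with points in $\RR^3$, partition with a degree-$D$ polynomial, and induct. But there is a genuine gap in the partitioning step that the paper has to work around, and your sketch does not address it.

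You claim that a Milnor--Thom bound shows $Q(\tilde\Gamma)$ meets at most $O(D^2\delta^{-O(1/K)})$ cells. This is false as stated. The set $Q(\tilde\Gamma)$ is not a two-dimensional variety; it is a full-dimensional semi-algebraic region (morally the $\delta$-neighborhood of a cone in $\RR^3$), and nothing prevents it from \emph{containing} many cells entirely. Milnor--Thom only controls the number of connected components of $S_{\tilde\Gamma}\setminus\{P=0\}$ where $S_{\tilde\Gamma}$ is a two-dimensional algebraic set containing $\bdry Q(\tilde\Gamma)$. So the correct bound is: $\bdry Q(\tilde\Gamma)$ meets $O(D^2 k^C)$ cells. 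For those cells the induction runs exactly as you describe. But for cells $\Omega$ with $\Omega\subset Q(\tilde\Gamma)$ you have no such bound, and this case must be handled by a separate argument.

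The paper's fix is as follows. Before partitioning, one uses a cluster decomposition (Lemmas~\ref{lem41} and \ref{lemma115}) to pass to subfamilies $\WC_g,\BC_g$ with no $(\gtrsim\mu_0,\gtrsim 1)$ or $(\gtrsim 1,\gtrsim\nu_0)$ rectangles, $\mu_0=\nu_0=(mn)^{1/4}$. Now if $\Omega\subset Q(\Gamma)$ for some $\Gamma\in\BC_g$, then \emph{every} $\tilde\Gamma\in\WC_g\cap\Omega$ is near-tangent to $\Gamma$, so $\Gamma$ contributes $\gtrsim mD^{-3}$ incidences. The incidence upper bound of Lemma~\ref{lemma111} (which uses the no-high-multiplicity-rectangle property just arranged) then forces the set of such $\Gamma$ to have size $\lesssim D^3 n^{1-\epsilon}\log n$, which is $\leq n/1000$ for $D<n^{\epsilon/6}$. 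One then applies the induction hypothesis once more to this small leftover. Without the cluster step, Lemma~\ref{lemma111} is unavailable and the ``cell entirely inside $Q$'' case does not close. Your sketch omits both the cluster decomposition and this dichotomy, so the recursion as written does not close.

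A minor point: the paper handles $\WC\cap\{P=0\}$ not by codimension reduction but by first perturbing $\WC$ to be hypersurface generic, so that $\#(\WC\cap\{P=0\})\lesssim D^3$ and the trivial bound $\rect\lesssim D^3 n$ suffices.
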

\subsection{Schlag's reduction}
We shall recall a result due to Schlag that shows that Lemma \ref{quantitativeMaximalFunctionBound} is implied by a combinatorial lemma controlling the number of almost-incidences between $\Psi$--circles. In order to state Schlag's result, we will first need several definitions.

\begin{defn}
Let $\Gamma$ and $\tilde\Gamma$ be two $\Psi$ circles. We define
\begin{equation}\label{defOfDeltaX}
\Delta(\Gamma,\tilde\Gamma)=\inf_{\substack{y\in B(0,\alpha)\colon \Psi(x_0,y)=r_0\\\tilde y\in B(0,\alpha)\colon\Psi(\tilde x_0,\tilde y)=\tilde r_0}} |y-\tilde y|+\Big|\frac{\nabla_y\Psi(x_0,y)}{\norm{\nabla_y\Psi(x_0,y)}}-\frac{\nabla_y\Psi(\tilde x_0,\tilde y)}{\norm{\nabla_y\Psi(\tilde x_0,\tilde y)}}\Big|.
\end{equation}
\end{defn}
Informally, if $\Delta(\Gamma,\tilde\Gamma)$ is small then there is a point $y\in B(0,\alpha)$ where $\Gamma$ and $\tilde\Gamma$ pass close to each other and are nearly parallel (i.e.~they are nearly tangent).

Let
\begin{equation}
d(\Gamma,\tilde\Gamma)=|x_0-\tilde x_0|+|r_0-\tilde r_0|.
\end{equation}
$d(\cdot,\cdot)$ is a metric on the space of curves.

\begin{defn}\label{defnOfAtBipartitePair}
Let $\mathcal W$ and $\mathcal B$ be collections of $\Psi$--circles. We say that $\WB$ is a \emph{$\dt$--bipartite pair} if
\begin{align}
&|r_0-\tilde r_0|\geq\delta\ \textrm{for all}\ \Gamma,\tilde\Gamma\in\WC\cup\BC,\\
&d(\Gamma,\tilde\Gamma)\in (t,2t)\ \textrm{if}\ \Gamma\in \WC,\ \tilde\Gamma\in\BC,\\
&d(\Gamma,\tilde\Gamma)\in (0,t)\ \textrm{if}\ \Gamma,\tilde\Gamma\in \WC\ \textrm{or}\ \Gamma,\tilde\Gamma\in \BC.
\end{align}
\end{defn}
\begin{defn}\label{defnOfARectangle}
A \emph{$\dt$--rectangle} $R$ is the $\delta$--neighborhood of an arc of length $\sqrt{\delta/t}$ of a $\Psi$--circle $\Gamma$. We say that a $\Psi$--circle $\Gamma$ is \emph{incident} to $R$ if $R$ is contained in the $C_1\delta$ neighborhood of $\Gamma$. We say that $R$ is of type $(\gtrsim\mu,\gtrsim\nu)$ relative to a $\dt$--bipartite pair $\WB$ if $R$ is incident to at least $C\mu$ curves in $\mathcal W$ and at least $C\nu$ curves in $\mathcal B$ for some absolute constant $C$ to be specified later. We say that two $\dt$--rectangles $R_1,R_2$ are \emph{comparable} if $R_1$ is contained in a $A_0\delta$--neighborhood of $R_2$ and vice versa, where $A_0$ is an absolute constant. Otherwise, we say $R_1$ and $R_2$ are \emph{incomparable}.
\end{defn}
We are now able to state Schlag's result.
\begin{prop}[Schlag]\label{schlagsThm}
Let $\mathcal A$ be a family of $\Psi$--circles with $\delta$--separated radii that satisfy the following requirements:
\begin{enumerate}[label=(\roman{*}), ref=(\roman{*})]
\item\label{schlagThmItemOne}\itemizeEqnVSpacing
\begin{equation}\label{schlagThmControlOfIntersectionSize}
|\Gamma^\delta\cap \tilde \Gamma^\delta\cap B(0,\alpha)|\lesssim\frac{\delta^2}{(d(\Gamma,\tilde\Gamma)+\delta)^{1/2}(\Delta(\Gamma,\tilde\Gamma)+\delta)^{1/2}}.
\end{equation}
\item\label{schlagThmItemTwo} Fix $\epsilon>0.$ Then there exists a constant $C_\epsilon$ so that for any $\dt$--bipartite pair $\WB$, with $t>C\delta$ for an appropriate choice of $C$; $\WC,\BC\subset\mathcal A;\ \#\WC=m;$ and $\#\BC=n,$ the maximum number of pairwise incomparable $\dt$--rectangles of type $(\gtrsim\mu,\gtrsim\nu)$ relative to $\WB$ is at most
\begin{equation}\label{schlagThmNumberIncidencesControl}
C_\epsilon\delta^{-\epsilon}\Big(\Big(\frac{mn}{\mu\nu}\Big)^{3/4}+\frac{m}{\mu}+\frac{n}{\nu}\Big).
\end{equation}
\end{enumerate}
Then Lemma \ref{quantitativeMaximalFunctionBound} holds for the collection $\mathcal A$.
\end{prop}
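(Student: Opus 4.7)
The plan is to argue by contradiction: assuming Lemma \ref{quantitativeMaximalFunctionBound} fails for a collection $\mathcal A$ satisfying hypotheses (i) and (ii), I will extract a $\dt$--bipartite pair $\WB$ and exhibit more pairwise incomparable $\dt$--rectangles of some type $(\gtrsim\mu,\gtrsim\nu)$ than \eqref{schlagThmNumberIncidencesControl} permits. The contrapositive of the lemma's conclusion, combined with dyadic pigeonholing in $\lambda\in(\delta,1)$ (losing only a factor $\log(1/\delta)$), produces a positive-density subfamily $\mathcal A'\subset\mathcal A$ and a single scale $\lambda$ with
\begin{equation*}
\bigl|\bigl\{y\in\Gamma^\delta\cap B(0,\alpha):\textstyle\sum_{\tilde\Gamma\in\mathcal A}\chi_{\tilde\Gamma^\delta}(y)>\delta^{-\eta}\lambda^{-2}\bigr\}\bigr|>\lambda|\Gamma^\delta|
\end{equation*}
for every $\Gamma\in\mathcal A'$.

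For each such $\Gamma$ I would tile $\Gamma^\delta$ by $\dt$--rectangles at each dyadic scale $t\in[\delta,1]$. Hypothesis (i) concentrates the pairwise intersection $\Gamma^\delta\cap\tilde\Gamma^\delta$ into essentially a single $\dt$--rectangle with $t\approx d(\Gamma,\tilde\Gamma)$, and forces a point of multiplicity $\delta^{-\eta}\lambda^{-2}$ to see mostly \emph{near-tangent} partners (those with $\Delta(\Gamma,\tilde\Gamma)\lesssim\delta$) rather than transversal ones. Further dyadic pigeonholing over $t$ and over multiplicity levels $\mu,\nu$ then isolates a common scale $t$, multiplicities $\mu,\nu\gtrsim\delta^{-\eta+o(1)}\lambda^{-2}$, and a still-positive-density $\mathcal A''\subset\mathcal A'$ whose chosen rectangles all share this type. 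A standard random two-coloring of $\mathcal A$ at parameter scale $t$ then turns the resulting incidence structure into a $\dt$--bipartite pair $\WB$ in the sense of Definition \ref{defnOfAtBipartitePair}, whose selected rectangles are of type $(\gtrsim\mu,\gtrsim\nu)$ relative to $\WB$.

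To close the contradiction I would count: each $\Gamma\in\mathcal A''$ contributes $\gtrsim\lambda\sqrt{t/\delta}$ pairwise incomparable rectangles along its arc (the $\lambda$-fraction of bad mass is tiled by rectangles of length $\sqrt{\delta/t}$), yielding a lower bound of order $|\mathcal A''|\cdot\lambda\sqrt{t/\delta}$ on incomparable $\dt$--rectangles of type $(\gtrsim\mu,\gtrsim\nu)$. Substituting $m,n\sim|\mathcal A|$ and the above lower bounds for $\mu,\nu$ into \eqref{schlagThmNumberIncidencesControl} with $\epsilon\ll\eta$ yields an upper bound incompatible with this lower bound in each of the three regimes of the maximum appearing in \eqref{schlagThmNumberIncidencesControl}, provided $\eta$ is taken large enough relative to $\epsilon$. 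I expect the bipartite extraction together with the $(\mu,\nu)$-multiplicity pigeonhole to be the main obstacle: one must simultaneously separate $\mathcal W$ from $\mathcal B$ at parameter scale $t$ and preserve lower bounds on both multiplicities, so that the random coloring and the $O(\log^C(1/\delta))$ pigeonholing losses remain absorbed in the $\delta^{-\eta}$ slack built into the definition of the level set above.
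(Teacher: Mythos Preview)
The paper does not prove this proposition; it is quoted as a result of Schlag from \cite{Schlag} and treated as a black box (with only the remark that the slightly weaker bound \eqref{schlagThmNumberIncidencesControl} in place of \eqref{strongerSchlagBound} still suffices upon inspecting Schlag's proof). So there is no ``paper's own proof'' to compare against; the content of the present paper is the verification of hypotheses \ref{schlagThmItemOne} and \ref{schlagThmItemTwo}, not the reduction itself.

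That said, your outline follows the standard shape of Schlag's reduction: assume failure, pigeonhole to a single scale $\lambda$, use \eqref{schlagThmControlOfIntersectionSize} to force the high-multiplicity contribution to come from near-tangent partners, pigeonhole over dyadic $t\approx d(\Gamma,\tilde\Gamma)$ and over multiplicities $\mu,\nu$, then count incomparable rectangles and contradict \eqref{schlagThmNumberIncidencesControl}. One point to tighten: the extraction of a genuine $\dt$--bipartite pair in the sense of Definition \ref{defnOfAtBipartitePair} is not achieved by a random two-coloring alone. The definition requires \emph{diameter} conditions---all of $\WC$ within distance $t$ of each other, all of $\BC$ within $t$, and every $\WC$--$\BC$ pair at distance in $(t,2t)$---and a random coloring of $\mathcal A$ does not produce this. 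In Schlag's argument one first localizes: cover parameter space by balls of radius $\sim t$, pigeonhole to find a ball $B$ carrying many of the selected curves and many of their partners in an adjacent ball $B'$, and take $\WC=\mathcal A\cap B$, $\BC=\mathcal A\cap B'$. Only after this spatial localization do the multiplicity lower bounds $\mu,\nu$ survive (with the usual $\log^{O(1)}(1/\delta)$ losses absorbed into $\delta^{-\eta}$). If you replace the random-coloring step by this ball-localization, your sketch matches Schlag's argument.
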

\begin{rem}
Schlag uses the stronger bound
\begin{equation}\label{strongerSchlagBound}
C_\epsilon(mn)^{\epsilon}\Big(\Big(\frac{mn}{\mu\nu}\Big)^{3/4}+\frac{m}{\mu}+\frac{n}{\nu}\Big)
\end{equation}
in place of \eqref{schlagThmNumberIncidencesControl}. However, an examination of the proof in \cite{Schlag} reveals that the bound \eqref{schlagThmNumberIncidencesControl} suffices. If we restrict our attention to the original Wolff circular maximal function (i.e.~if we are only concerned with the circles case), then we obtain the bound \eqref{strongerSchlagBound}, so Schlag's result can be used as a black box.
\end{rem}
Property \ref{schlagThmItemOne} follows from \cite[Lemma 3.1(i)]{Wolff3}, but if the reader is only interested in the original Wolff circular maximal function, a shorter proof can be found in \cite[\S 3]{Wolff2}. Property \ref{schlagThmItemTwo} follows from the following lemma, which is an analogue of Lemma 1.4 in \cite{Wolff4}:
\begin{lem}\label{thinkOfNameForLem1}
 Let $\Psi\colon \RR^2\times\RR^2\to \RR$ be a (multivariate) polynomial of degree $k$ satisfying the cinematic curvature requirements. Then for every $\epsilon>0$ there exists a constant $C_\epsilon$ such that if $\WB$ is a $\dt$--bipartite pair of $\Psi$--circles with $\#\mathcal W = m,\ \#\mathcal B = n,$ and if $\mathcal R$ is a collection of pairwise incomparable $\dt$--rectangles of type $(\gtrsim\mu,\gtrsim\nu)$ relative to $\WB$, then
\begin{equation}\label{SchlagFood}
\#\mathcal R \leq C_\epsilon k^{C_\epsilon} (mn)^{\epsilon}\Big(\Big(\frac{mn}{\mu\nu}\Big)^{3/4}+\frac{m}{\mu}+\frac{n}{\nu}\Big).
\end{equation}
\end{lem}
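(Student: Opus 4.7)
The plan is to prove \eqref{SchlagFood} by induction on $m+n$, using the discrete polynomial ham sandwich theorem applied in the three-dimensional parameter space of $\mathcal{W}$. The base case is a direct incidence estimate (as in \cite{Wolff4}), valid when $m$ or $n$ is below a threshold depending on $\mu, \nu$, in which regime the $m/\mu + n/\nu$ term on the right of \eqref{SchlagFood} already dominates.

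Each $\Psi$-circle $\Gamma \in \mathcal{W}$ is identified with the point $(x_0, r_0) \in \RR^3$ given by its center and radius, yielding a point set $P \subset \RR^3$ of size $m$. The discrete polynomial ham sandwich theorem produces a polynomial $F$ of degree $D$ (to be chosen shortly) whose zero set $Z(F)$ partitions $\RR^3 \setminus Z(F)$ into $O(D^3)$ open cells, each containing $O(m/D^3)$ points of $P$. For each $\tilde\Gamma \in \mathcal{B}$, I will associate a semi-algebraic set $Q(\tilde\Gamma) \subset \RR^3$ consisting of those parameter triples $(x_0, r_0)$ for which the corresponding $\Psi$-circle shares a common $\dt$-rectangle with $\tilde\Gamma$. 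Since $\Psi$ is polynomial of degree $k$ satisfying cinematic curvature, and since incidence in a common $\dt$-rectangle and near-tangency can be written as a conjunction of polynomial inequalities in the parameters, $Q(\tilde\Gamma)$ is a semi-algebraic set of description complexity $\mathrm{poly}(k)$. A Milnor--Thom / Barone--Basu cell-counting bound then gives that each $Q(\tilde\Gamma)$ meets at most $C\,\mathrm{poly}(k)\,D^2$ of the cells.

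I split $\mathcal{R}$ into cellular rectangles, whose $\gtrsim \mu$ incident $\mathcal{W}$-curves have parameters lying in the interior of some single cell, and non-cellular rectangles, whose $\mathcal{W}$-curves are forced to cluster on $Z(F)$. For each cell $\Omega$, set $m_\Omega \lesssim m/D^3$ and let $n_\Omega = \#\{\tilde\Gamma \in \mathcal{B} : Q(\tilde\Gamma) \cap \Omega \neq \emptyset\}$. The inductive hypothesis, applied inside $\Omega$ to the sub-collections of $\mathcal{W}, \mathcal{B}$ and the sub-collection of $\mathcal{R}$ realized there, bounds the cellular rectangles in $\Omega$ by \eqref{SchlagFood} with $(m,n)$ replaced by $(m_\Omega, n_\Omega)$. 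Summing over cells by H\"older's inequality (to distribute the $(m_\Omega n_\Omega / \mu\nu)^{3/4}$ contribution) and using the aggregate bound $\sum_\Omega n_\Omega \lesssim n\,\mathrm{poly}(k)\,D^2$, I obtain a per-step gain $D^{-1/4}$ that outweighs the $\mathrm{poly}(k)$ loss once $D$ is chosen as a suitable power of $k$ times $(mn)^{\epsilon'}$ for a small $\epsilon' \ll \epsilon$. The non-cellular contribution is handled by a direct argument using cinematic curvature: the $\mathcal{W}$-circles whose parameters lie on the fixed surface $Z(F)$ of degree $D$ are constrained enough (by the rank conditions \eqref{cinematicCurvatureGradientCondition}--\eqref{cinematicCurvatureCondition}) that their contribution is absorbed into the $m/\mu + n/\nu$ term.

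The main obstacle is balancing the defining degree $k$ of $\Psi$ (which can be as large as $\delta^{-1/K}$ from \eqref{degreeOfPsi}) against the partitioning degree $D$: the cell-intersection bound $\mathrm{poly}(k)\,D^2$ must be only mildly larger than the cell count $D^3$ in order for the recursion to close. This is exactly where the ham sandwich theorem outperforms the vertical algebraic decomposition used in \cite{Zahl}: the latter yields a much poorer cell-intersection bound, whereas the former gives a loss per recursion step that exponentiates over $O(1/\epsilon)$ iterations to the admissible factor $k^{C_\epsilon}(mn)^\epsilon$. A secondary delicate point is the non-cellular analysis, where cinematic curvature is essential to rule out the scenario in which most $\mathcal{W}$-curves have parameters concentrating on a single low-degree surface while still being near-tangent to many common curves in $\mathcal{B}$.
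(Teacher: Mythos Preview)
Your overall architecture matches the paper's---identify $\mathcal W$ with points in $\RR^3$, partition with a degree-$D$ polynomial, associate a semi-algebraic tangency set $Q(\tilde\Gamma)$ to each $\tilde\Gamma\in\mathcal B$, and close an induction by summing over cells---but there is a genuine gap in the cell-counting step. You assert that Milnor--Thom gives that each $Q(\tilde\Gamma)$ meets at most $C\,\mathrm{poly}(k)\,D^2$ cells. But $Q(\tilde\Gamma)$ as you define it is an \emph{open}, hence $3$-dimensional, semi-algebraic set (roughly the $\delta$-neighborhood of a cone in $\RR^3$). Bounds of order $D^2$ apply to a $2$-dimensional variety such as $\bdry Q(\tilde\Gamma)$, not to $Q(\tilde\Gamma)$ itself: a full-dimensional set can swallow all $\sim D^3$ cells. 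The correct dichotomy is that if $Q(\tilde\Gamma)\cap\Omega\neq\emptyset$ then either $\bdry Q(\tilde\Gamma)\cap\Omega\neq\emptyset$ (where Thom--Milnor does apply) or $\Omega\subset Q(\tilde\Gamma)$, in which case $\tilde\Gamma$ is near-tangent to \emph{every} $\Gamma\in\mathcal W\cap\Omega$, and there is no a~priori bound on how many cells fall into this case.

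The paper handles this second case by an argument you are missing. It first reduces to $\mu=\nu=1$ by random sampling, then prunes $\mathcal W$ and $\mathcal B$ via the cluster decomposition of Lemma~\ref{lemma115} (with $\mu_0=(mn)^{1/4}$) so that the surviving pair $(\mathcal W_g,\mathcal B_g)$ has no $(\gtrsim\mu_0,\gtrsim 1)$ or $(\gtrsim 1,\gtrsim\mu_0)$ rectangles. Under that hypothesis, the incidence bound of Lemma~\ref{lemma111} forces the set of $\tilde\Gamma$ with some $\Omega\subset Q(\tilde\Gamma)$ to have cardinality at most $n/1000$, so the induction hypothesis disposes of that sub-pair directly. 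Without this pruning step your recursion does not close. As a secondary point, the non-cellular contribution is handled in the paper by perturbing $\mathcal W$ to be hypersurface generic so that at most $O(D^3)$ points lie on $Z(F)$; your appeal to cinematic curvature for this step is not obviously sufficient.
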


To obtain Property \ref{schlagThmItemTwo} from Lemma \ref{thinkOfNameForLem1}, select $K>C_\epsilon/\epsilon$ in \eqref{degreeOfPsi} and note that $(mn)^\epsilon\leq\delta^{2\epsilon}$. In the case of circles we have $k=O(1)$, and \eqref{SchlagFood} becomes \eqref{strongerSchlagBound}.

Thus all that remains is to prove Lemma \ref{thinkOfNameForLem1}. First, we shall recall several properties of curves satisfying the cinematic curvature condition.
\subsection{Properties of $\Psi$--circles}
\begin{defn}
If $\WB$ is a $\dt$--bipartite pair, then we define $\rect_{\mu,\nu}(\WC,\BC)$ to be the maximum cardinality of a collection of pairwise incomparable rectangles of type $(\gtrsim\mu,\gtrsim\nu)$ relative to $\WB$. Define $\rect(\WC,\BC)$ to be $\rect_{1,1}(\WC,\BC)$.
\end{defn}
\begin{defn}
If $\WB$ is a $\dt$--bipartite pair, then we define
\begin{equation*}
\incidences(\WC,\BC)=\#\{(R,\Gamma,\tilde\Gamma)\colon \Gamma\in\WC,\ \tilde\Gamma\in\BC,\ \textrm{$R$ is incident to}\ \Gamma\ \textrm{and}\ \tilde\Gamma\}.
\end{equation*}
\end{defn}

The following ``Canham threshold'' type result is Lemma 34 from \cite{Zahl} (or in the case of circles, Lemma 1.10 from \cite{Wolff4} again has a shorter proof).
\begin{lem}\label{canhamLem}
Let $\WB$ be a $\dt$--bipartite pair. Then
\begin{equation}\label{CanhamThreshhold}
\rect \WB\lesssim nm^{2/3}+m\log n.
\end{equation}
\end{lem}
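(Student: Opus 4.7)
The plan is to prove $\rect(\WC,\BC) \lesssim nm^{2/3} + m\log n$ by reducing to a bipartite near-tangency count and applying Bezout's theorem together with incidence-geometric tools, following Wolff \cite{Wolff4} in the circles case and Zahl \cite{Zahl} for general algebraic $\Psi$. The structural input is Bezout: two distinct $\Psi$-circles meet in $O(k^2)$ points, and together with cinematic curvature and the $A_0\delta$-incomparability of rectangles this implies that any fixed pair of $\Psi$-circles is simultaneously incident to $O(1)$ rectangles from $\mathcal R$ (with the $k$-dependence absorbed into the implicit constant). Equivalently, both bipartite incidence graphs $\mathcal R \times \WC$ and $\mathcal R \times \BC$ are $K_{2,O(1)}$-free; the analogous statement for triples (a common near-tangency of three curves being a codimension-$2$ condition) moreover gives $K_{2,O(1)}$-freeness of $\mathcal R_{\tilde\Gamma} \times \WC$ once a black curve $\tilde\Gamma$ is fixed.

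From here I would assign each rectangle a canonical black curve $\tilde\Gamma_R \in \BC$, writing $\mathcal R = \bigsqcup_{\tilde\Gamma} \mathcal R_{\tilde\Gamma}$, and for each $\tilde\Gamma$ treat the elements of $\mathcal R_{\tilde\Gamma}$ as essentially lying on the one-dimensional curve $\tilde\Gamma$. The incidences with $\WC$ can then be controlled by a Szemer\'edi--Trotter-style multiplicity bound combined with a dyadic pigeonhole over the white multiplicity $w_R \in [2^j,2^{j+1})$: balancing the low- and high-multiplicity dyadic regimes yields a per-curve bound $|\mathcal R_{\tilde\Gamma}| \lesssim m^{2/3} + \log m$. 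Summing over $\tilde\Gamma \in \BC$ then produces the main term $nm^{2/3}$, while the $m\log n$ correction collects the logarithmic contribution from the dyadic scales on the $\BC$ side at which rectangles have only $O(1)$ black incidences and the per-curve bound saturates.

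The main obstacle is extracting the exponent $2/3$. A single application of K\H{o}v\'ari--S\'os--Tur\'an to either bipartite graph only gives $|\mathcal R| \lesssim \min(m,n)^2$, which is too weak. Reaching $nm^{2/3}$ forces simultaneous use of both bipartite structures together with the codimension-$2$ refinement for triples of tangent curves, and it is precisely the cutting / cell-decomposition step hidden in the per-curve analysis that carries the geometric weight of the argument. In the circles case ($k = O(1)$) this is Wolff's Lemma 1.8 of \cite{Wolff4}; the extension to $\Psi$-circles of degree $k$ is the content of \cite[Lemma 34]{Zahl}, at the cost of the absorbed $k^{O(1)}$ factor.
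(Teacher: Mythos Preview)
The paper does not give a self-contained proof of this lemma either: it cites \cite[Lemma~34]{Zahl} (and, for circles, \cite[Lemma~1.10]{Wolff4}) and offers a one-sentence summary --- a quantitative Marstrand three-circle lemma (the Apollonius-type fact that three circles not all tangent at a common point admit at most $O(1)$ common tangent circles) combined with the K\H{o}v\'ari--S\'os--Tur\'an theorem. Since you also ultimately defer to \cite[Lemma~34]{Zahl}, your proposal and the paper agree at the level of citation.

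Where your sketch departs from the actual argument is in the mechanism producing the exponent $2/3$. You propose to assign each rectangle to a black curve $\tilde\Gamma$ and then establish a \emph{per-curve} bound $\#\mathcal R_{\tilde\Gamma} \lesssim m^{2/3}$ via a ``Szemer\'edi--Trotter-style'' argument along the one-dimensional curve $\tilde\Gamma$. This step is a genuine gap: there is no Szemer\'edi--Trotter phenomenon in one dimension, and nothing you have written forces $\#\mathcal R_{\tilde\Gamma} \lesssim m^{2/3}$. A single white curve is near-tangent to $\tilde\Gamma$ in only $O(1)$ places, so the only per-curve bound available is $\#\mathcal R_{\tilde\Gamma} \lesssim m$, and this is sharp --- all $m$ white curves can be near-tangent to a single $\tilde\Gamma$ at pairwise incomparable locations. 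No dyadic pigeonholing over white multiplicities improves on this without an additional \emph{global} input.

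The correct route, as the paper indicates, is global: the Marstrand/Apollonius lemma shows that the bipartite near-tangency graph between $\WC$ and $\BC$ is essentially $K_{3,O(1)}$-free (three white circles in general position have $O(1)$ common near-tangent circles), and K\H{o}v\'ari--S\'os--Tur\'an applied to this graph yields $nm^{2/3}$ directly. The $m\log n$ correction absorbs the degenerate configurations in which three curves are nearly tangent at a common point. Your remark that ``a common near-tangency of three curves is a codimension-$2$ condition'' is exactly the right geometric input, but it feeds a global K\H{o}v\'ari--S\'os--Tur\'an bound, not a curve-by-curve one. (A minor point: the circles-case reference for the full statement is Lemma~1.10 of \cite{Wolff4}; Lemma~1.8 there is the Marstrand ingredient.)
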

In brief, Lemma \ref{canhamLem} relies on a variant of the Marstrand three circle lemma \cite{Marstrand}, which is a quantitative formulation of the classical theorem of Appolonius: given three circles that are not all tangent at a common point, there exist at most two circles that are tangent to each of the three given circles. This observation is combined with the K\H{o}vari-S\'os-Turan theorem from \cite{Turan}. Details are in \cite{Zahl}.
\begin{defn}
A collection $\mathcal C$ of $\Psi$--circles is a \emph{cluster} if there exists a $\dt$--rectangle $R$ such that every $\Gamma\in\mathcal C$ is incident to a $\dt$--rectangle comparable to $R$.
\end{defn}
While a cluster can contain many $\Psi$--circles, if we are interested only in counting incidence rectangles then a cluster behaves like a single $\Psi$--circle. This heuristic is made precise though Lemma 41 from \cite{Zahl}:
\begin{lem}\label{lem41}
Let $\mathcal C \subset \mathcal W$ be a cluster and let $\Gamma\in\mathcal B$. Then then any set of pairwise incomparable $\dt$--rectangles, each of which is tangent to some $\Psi$--circle in $\mathcal C$ and to $\Gamma,$ has cardinality $O(1)$.
\end{lem}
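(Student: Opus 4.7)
My plan is to identify $\Psi$--circles with points in $\RR^3$ via their parameters $(x_0,r_0)$; the cinematic curvature conditions ensure that this is a genuine smooth embedding near the base point. In these coordinates, each $\dt$--rectangle $R$ determines an \emph{incidence tube} $T_R \subset \RR^3$, namely the $O(\delta)$--neighborhood of a smooth one--dimensional arc consisting of those parameters whose $\Psi$--circle is incident to $R$. The cluster hypothesis then says exactly that $\mathcal C$ lies inside one such tube $T_R$.

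Next I would set up the problem in these coordinates. Let $R_1, \ldots, R_N$ be pairwise incomparable $\dt$--rectangles, each tangent to $\Gamma$ and to some $\tilde\Gamma_i \in \mathcal C$. The $\dt$--rectangles incident to the fixed $\Psi$--circle $\Gamma$ form (up to comparability) a one--parameter family indexed by position along $\Gamma$, and pairwise incomparable such rectangles must be separated by $\gtrsim \sqrt{\delta/t}$ in this parameter. The additional condition that $R_i$ be tangent to some $\tilde\Gamma_i \in \mathcal C$ amounts to $T_{R_i} \cap T_R \neq \emptyset$: the arc $T_{R_i}$ must pass within $O(\delta)$ of the arc $T_R$ in $\RR^3$.

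The heart of the argument is then a quantitative transversality step. As $R_i$ slides along $\Gamma$, the arc $T_{R_i}$ sweeps out a two--dimensional surface $S \subset \RR^3$. The separation hypothesis $d(\Gamma, \tilde\Gamma) \in (t,2t)$ keeps $S$ away from degenerate configurations with $T_R$, and cinematic curvature then forces $S$ to meet $T_R$ transversally with both sweep speed and intersection angle of order $1$ at the relevant scales. Consequently the set of sliding parameters along $\Gamma$ giving $T_{R_i} \cap T_R \neq \emptyset$ is a union of $O(1)$ intervals of length $O(\sqrt{\delta/t})$, which accommodates only $O(1)$ pairwise incomparable rectangles.

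I expect the main obstacle to be turning the transversality step into a clean quantitative estimate purely from the cinematic curvature hypotheses and the scale separation $t \gtrsim \delta$. The natural tool is a Marstrand--style three--$\Psi$--circle lemma, analogous to the one already invoked for Lemma~\ref{canhamLem}: the two near--tangencies of $\tilde\Gamma_i$ (one at $R$, one at $R_i$) together with the tangency of $\Gamma$ at $R_i$ over--determine the configuration, and cinematic curvature is precisely what lets one convert this over--determination into the uniform transversality needed to conclude $N = O(1)$.
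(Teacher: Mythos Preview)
The paper does not actually prove this lemma here; it quotes it verbatim as Lemma~41 of \cite{Zahl} and moves on. So there is no in-paper argument to compare against, only the reference.

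As for your proposal itself: the geometric picture you set up is the right one, and identifying $T_R$ with the incidence tube of the cluster is exactly how one should think about the hypothesis. But what you have written is a strategy, not a proof. The entire content of the lemma is the statement you label ``the main obstacle'': that the swept surface $S$ (essentially the $\delta$--thickened light cone of $\Gamma$) meets the curve $T_R$ in $O(1)$ arcs of length $O(\sqrt{\delta/t})$, uniformly in all the parameters. You assert that cinematic curvature ``forces'' this transversality, but you do not supply the estimate, and you acknowledge you have not. Until that step is carried out, the argument is circular: the Marstrand-type three-$\Psi$-circle lemma you invoke at the end is not a different tool that would resolve the difficulty --- it \emph{is} the difficulty, restated.

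The route taken in \cite{Zahl} (and in the circle case in \cite{Wolff4}) is somewhat more direct than your parameter-space picture: one fixes a reference $\Psi$--circle $\tilde\Gamma_0$ incident to $R$ and uses the basic two-$\Psi$-circle intersection estimate (the same geometry underlying \eqref{schlagThmControlOfIntersectionSize}) to show that any $(\delta,t)$--rectangle tangent to $\Gamma$ and to some $\tilde\Gamma\in\mathcal C$ is comparable to one of the $O(1)$ rectangles tangent to $\Gamma$ and $\tilde\Gamma_0$. This avoids having to speak of sweeping surfaces and instead reduces everything to pairwise tangency counts, for which the needed quantitative input has already been established. Your approach would ultimately unwind to the same estimates, but as written it defers rather than performs the key computation.
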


The following is Lemma 43 from \cite{Zahl}:
\begin{lem}\label{lemma115}
Let $\WB$ be a $\dt$--bipartite pair. Given a value of $\mu_0$, we can write
\begin{equation}
\WC=\WC_g\sqcup\WC_b,
\end{equation}
where
\begin{enumerate}[label=(\roman{*}), ref=(\roman{*})]
\item\label{lemma115Prop1} $(\WC_g,\BC)$ have no
$\dt$--rectangles of type
$(\gtrsim\mu_0,\gtrsim1)$.%
\item\label{lemma115Prop2} $\WC_b$ is the union of
$\lesssim\frac{\#\WC}{\mu_0}(\log m)(\log n)$ clusters.
\end{enumerate}
\end{lem}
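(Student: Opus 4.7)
\textbf{Proof plan for Lemma \ref{lemma115}.}

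The plan is to construct $\WC_b$ greedily, peeling off one cluster at a time. I set $\WC^{(0)} := \WC$, and at stage $i\geq 1$ I ask whether the bipartite pair $(\WC^{(i-1)},\BC)$ still admits a $\dt$--rectangle of type $(\gtrsim \mu_0, \gtrsim 1)$. If yes, I fix any such rectangle $R_i$ and define
\[
\mathcal C_i := \{\Gamma\in\WC^{(i-1)} : \Gamma \text{ is incident to some } \dt\text{--rectangle comparable to } R_i\}.
\]
I add $\mathcal C_i$ to the running collection of clusters and set $\WC^{(i)} := \WC^{(i-1)}\setminus\mathcal C_i$. If no such heavy rectangle exists, I halt and declare $\WC_g := \WC^{(i-1)}$, $\WC_b := \WC\setminus\WC_g$. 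By construction each $\mathcal C_i$ is a cluster in the sense of the definition (with $R_i$ as the center rectangle), and the exit condition directly yields property \ref{lemma115Prop1}.

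The key quantitative step is bounding the number of iterations. The hypothesis that $R_i$ has type $(\gtrsim\mu_0,\gtrsim 1)$ relative to $(\WC^{(i-1)},\BC)$ means that at least $C\mu_0$ distinct curves from $\WC^{(i-1)}$ are incident to $R_i$; all such curves lie in $\mathcal C_i$, since $R_i$ is trivially comparable to itself. Hence every stage removes at least $C\mu_0$ elements of $\WC$, and the greedy terminates after at most $m/(C\mu_0)$ stages, producing that many clusters. To match the $(\log m)(\log n)$ slack in the stated bound, one can perform a dyadic pigeonhole over the possible white- and black-popularities $\mu\in[\mu_0,m]$ and $\nu\in[1,n]$ of the rectangle $R_i$ selected at each stage (at most $O((\log m)(\log n))$ dyadic ranges), processing rectangles of fixed approximate type before moving on; but this overhead is only needed if one wishes to reproduce the statement literally.

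I expect the only real obstacle to be formal bookkeeping: confirming that the greedy really does halt with the claimed property, and that the definition of $\mathcal C_i$ behaves correctly when the same curve could belong to several candidate clusters. Both points are routine — the exit condition is by definition, and the lemma only requires $\WC_b$ to be \emph{covered by} few clusters (overlaps are allowed, or one may assign each removed curve to the first cluster it joins). The nontrivial ingredient I use is essentially just the lower bound on $|\mathcal C_i|$ forced by the definition of type $(\gtrsim\mu_0,\gtrsim 1)$; I do not need to invoke Lemma \ref{canhamLem} or the Marstrand three-circle machinery, nor Lemma \ref{lem41}, for this argument.
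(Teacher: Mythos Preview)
The paper does not prove this lemma in-line; it is quoted as Lemma~43 from \cite{Zahl} (itself a cinematic-curvature analogue of Wolff's Lemma~1.15 in \cite{Wolff4}). Your greedy extraction is correct and is exactly the standard argument: each stage removes at least $C\mu_0$ curves from $\WC$, so the process halts after $O(m/\mu_0)$ rounds, and as you observe the $(\log m)(\log n)$ factors in the stated bound are harmless slack not actually needed for the proof.
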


If every $\Psi$--circle from $\WC$ and $\BC$ are incident to some common rectangle $R$ then $\incidences(\WC,\BC)=(\#\WC)(\#\BC)$. However, if neither $\WC$ nor $\BC$ contain large clusters then this cannot occur.  Lemma 36 from \cite{Zahl} is a quantitative version of this observation:
\begin{lem}\label{lemma111}
Let $\WB$ be a $\dt$--bipartite pair that has no
$(\gtrsim1, \gtrsim\nu_0)$ or $(\gtrsim\mu_0,\gtrsim1)$ rectangles
$R\in B(b,\alpha)$. Then
\begin{equation}
\incidences(\mathcal W,\mathcal
B)\lesssim\mu_0^{1/3}nm^{2/3}\log \nu_0+\nu_0m\log\mu_0.
\end{equation}
\end{lem}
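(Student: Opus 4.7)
The plan is a two-parameter dyadic decomposition on the white- and black-multiplicities of incomparable rectangles, reducing each dyadic level to the Canham-type bound of Lemma \ref{canhamLem} via a random sampling argument.

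First I would fix a maximal collection $\mathcal R$ of pairwise incomparable $\dt$--rectangles contained in $B(b,\alpha)$, and partition it as $\mathcal R = \bigsqcup_{j,k} \mathcal R_{j,k}$ where $R \in \mathcal R_{j,k}$ is incident to $\sim 2^j$ curves of $\WC$ and $\sim 2^k$ curves of $\BC$. Because the hypothesis forbids rectangles of type $(\gtrsim \mu_0, \gtrsim 1)$ and $(\gtrsim 1, \gtrsim \nu_0)$, only the ranges $0 \leq j \lesssim \log \mu_0$ and $0 \leq k \lesssim \log \nu_0$ contribute nontrivially, and
\begin{equation*}
\incidences(\WC,\BC) \;\lesssim\; \sum_{j,k} 2^{j+k}\,\#\mathcal R_{j,k}.
\end{equation*}

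Next I would bound $\#\mathcal R_{j,k}$ by sampling $\WC$ and $\BC$ independently at rates $2^{-j}$ and $2^{-k}$ to obtain subfamilies $\WC'$, $\BC'$ of expected sizes $m/2^j$ and $n/2^k$. A standard Chernoff-type estimate shows that each $R \in \mathcal R_{j,k}$ remains of type $(\gtrsim 1, \gtrsim 1)$ relative to $(\WC', \BC')$ with probability bounded below by an absolute constant (once one tunes the implicit constants in Definition \ref{defnOfARectangle}), while pairwise incomparability is preserved deterministically. Applying Lemma \ref{canhamLem} to $(\WC', \BC')$ and taking expectations gives
\begin{equation*}
\#\mathcal R_{j,k} \;\lesssim\; \frac{n m^{2/3}}{2^k \cdot 2^{2j/3}} \;+\; \frac{m \log(n/2^k)}{2^j}.
\end{equation*}

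Finally, insert this into the dyadic sum. The first term contributes $n m^{2/3} \sum_{j,k} 2^{j/3}$; the sum over $j \lesssim \log \mu_0$ is geometric with top term $\mu_0^{1/3}$, and there are $\lesssim \log \nu_0$ admissible values of $k$, giving $\mu_0^{1/3} n m^{2/3} \log \nu_0$. The second term contributes $m \sum_{j,k} 2^k \log(n/2^k)$; the inner geometric series in $k$ is dominated by its largest term, yielding $\nu_0$, and the outer sum over $j$ contributes $\log \mu_0$, for a total of $\nu_0 m \log \mu_0$. Adding gives the claimed bound. The main obstacle is the sampling step: one must verify that the Chernoff success probability is uniform across all $R \in \mathcal R_{j,k}$ and large dyadic levels, and that the absolute constant in the ``$\gtrsim 1$'' hypothesis of Lemma \ref{canhamLem} is compatible with the constant $C$ used to define type $(\gtrsim \mu, \gtrsim \nu)$; once these constants are threaded through consistently, the rest is a routine geometric summation.
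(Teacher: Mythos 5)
Your strategy---dyadic decomposition in both multiplicities together with a random-sampling upgrade of Lemma \ref{canhamLem} to rectangles of type $(\gtrsim 2^j,\gtrsim 2^k)$---is the standard route to this lemma (the paper itself gives no proof but cites Lemma 36 of \cite{Zahl}, whose argument, following Wolff, has the same shape; the same sampling device is what the paper invokes to pass from Lemma \ref{muNuEqualsOneCase} to Lemma \ref{thinkOfNameForLem1}). The sampled estimate $\#\mathcal R_{j,k}\lesssim \frac{nm^{2/3}}{2^{k}2^{2j/3}}+\frac{m}{2^{j}}\log(n/2^{k})$ is fine modulo the constant-chasing you flag, as is the reduction $\incidences(\WC,\BC)\lesssim\sum_{j,k}2^{j+k}\#\mathcal R_{j,k}$, provided you state that the triples in $\incidences$ range over a fixed maximal pairwise incomparable family, and provided you add a word about rectangles incident to fewer than $C$ curves of one of the two families: the hypotheses only exclude rectangles that meet \emph{both} thresholds, so the claim that only $j\lesssim\log\mu_0$, $k\lesssim\log\nu_0$ occur needs a short side argument.

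The genuine gap is in the final summation. You assert that $\sum_{k\lesssim\log\nu_0}2^{k}\log(n/2^{k})$ is dominated by its largest term, ``yielding $\nu_0$''; in fact the largest term is $\nu_0\log(n/\nu_0)$, and the sum is comparable to $\nu_0\log(2+n/\nu_0)$, so the logarithm does not disappear. What your argument actually proves is
\begin{equation*}
\incidences(\WC,\BC)\lesssim\mu_0^{1/3}nm^{2/3}\log\nu_0+\nu_0\, m\,\log\mu_0\,\log\Big(2+\frac{n}{\nu_0}\Big),
\end{equation*}
which is strictly weaker than the stated bound whenever $n\gg\nu_0$. This is not a bookkeeping slip that a more careful geometric summation removes: summing $2^{j+k}$ against the sampled form of Lemma \ref{canhamLem} always leaves this factor attached to the second term, so obtaining the clean $\nu_0 m\log\mu_0$ requires either a sharper multiplicity version of the Canham bound than sampling alone provides, or a different treatment of the $m'\log n'$ contribution. (The weaker bound would in fact suffice for the only use of the lemma in this paper, namely \eqref{upperBdOnIncidences} with $\mu_0=\nu_0=(mn)^{1/4}$ and $Am^{1/3+\epsilon}<n<m$, where the extra logarithm is absorbed by the $n^{-\epsilon}$ gain; but as written your argument does not prove the lemma as stated.)
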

\subsection{Algebraic considerations}
We shall identify the $\Psi$--circle $\Gamma$ with the point $(x_0,r_0)\in\RR^3$ (actually in $B(0,\alpha)\times(1-\tau,1)\subset\RR^3$). Thus if $\WC$ is a collection of $\Psi$--circles, we shall abuse notation and simultaneously consider $\WC$ as a subset of $\RR^3$.
\begin{lem}\label{constructionOfSandwich}
Let $\Psi\colon\RR^2\times\RR^2\to\RR$ be a (multivariate) polynomial of degree $k$ that satisfies the cinematic curvature conditions. For each $\Psi$--circle $\Gamma$, there exists a set $Q(\Gamma)\subset\RR^3$ with the following properties:
\begin{enumerate}[label=(\roman{*}), ref=(\roman{*})]
\item\label{sandwichProperty1} $\bdry Q(\Gamma)$ is contained in an algebraic set $S_\Gamma$ of dimension $2$ and complexity $O(k^C)$ (see Appendix \ref{realAlgGeoAppendix} for relevant definitions).%
\item\label{sandwichProperty2} Let $\tilde\Gamma$ be a $\Psi$--circle with $d(\Gamma,\tilde\Gamma)>A\delta$ for $A$ a sufficiently large constant. If $\tilde\Gamma\in Q(\Gamma)$ then $\Delta(\Gamma,\tilde\Gamma)\leq100\delta$. Conversely, if $\Delta(\Gamma,\tilde\Gamma)<\delta$ then $\tilde\Gamma\in Q(\Gamma)$.
\end{enumerate}
\end{lem}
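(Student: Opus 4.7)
I would define $Q(\Gamma)$ to be the set of parameters $(\tilde x, \tilde r) \in \RR^3$ for which there exists $y \in \overline{B(0,\alpha)}$ satisfying (a) $\Psi(x_0, y) = r_0$, so that $y \in \Gamma$; (b) $|\Psi(\tilde x, y) - \tilde r| \leq C_1\delta$, so that $\tilde\Gamma$ passes within $O(\delta)$ of $y$; and (c) a polynomial inequality of degree $O(k)$ in $(y, \tilde x)$ encoding that the unit normals $N(x_0, y)$ and $N(\tilde x, y)$ differ by at most $C_2\delta$, written polynomially by squaring the defining expression and imposing the sign condition $\nabla_y\Psi(x_0,y) \cdot \nabla_y\Psi(\tilde x, y) > 0$. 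Here $C_1, C_2$ are absolute constants depending only on the cinematic curvature constants and on an upper bound for $\norm{\Psi}_{C^2}$, to be fixed below. The point of (c) is that, since $|\nabla_y\Psi|$ is uniformly bounded above and below by cinematic curvature and \eqref{howCloseArePhiAndPsi}, the unit-normal difference is comparable to a polynomial expression in $\nabla_y\Psi(x_0, y)$ and $\nabla_y\Psi(\tilde x, y)$, allowing us to avoid square roots.

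\textbf{Verifying property (ii).} For the converse direction, suppose $\Delta(\Gamma, \tilde\Gamma) < \delta$, and take $(y, \tilde y)$ nearly achieving the infimum in \eqref{defOfDeltaX}. Then $y \in \Gamma$, and the uniform Lipschitz bounds on $\Psi$ and on $N = \nabla_y\Psi/|\nabla_y\Psi|$ give $|\Psi(\tilde x_0, y) - \tilde r_0| = |\Psi(\tilde x_0, y) - \Psi(\tilde x_0, \tilde y)| \lesssim |y - \tilde y| \lesssim \delta$, while $|N(x_0, y) - N(\tilde x_0, y)| \leq |N(x_0,y) - N(\tilde x_0, \tilde y)| + O(|y - \tilde y|) \lesssim \delta$; choosing $C_1, C_2$ large enough then places $\tilde\Gamma$ in $Q(\Gamma)$. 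For the forward direction, given $\tilde\Gamma \in Q(\Gamma)$ with witness $y$, I would apply the implicit function theorem to $\Psi(\tilde x_0, \cdot)$, using the uniform lower bound on $|\nabla_y\Psi(\tilde x_0, \cdot)|$, to produce $\tilde y$ with $\Psi(\tilde x_0, \tilde y) = \tilde r_0$ and $|y - \tilde y| \lesssim \delta$; then the pair $(y, \tilde y)$ witnesses $\Delta(\Gamma,\tilde\Gamma) \lesssim \delta$, with explicit constants yielding $\leq 100\delta$. The hypothesis $d(\Gamma, \tilde\Gamma) > A\delta$ is a mild nondegeneracy assumption keeping us away from the diagonal where $\Gamma$ and $\tilde\Gamma$ coincide up to scale $\delta$.

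\textbf{Verifying property (i).} The defining formula of $Q(\Gamma)$ is a first-order sentence with a single existential block over the two variables $y = (y_1, y_2)$ whose polynomial atoms have degree $O(k)$, since $\Psi$ has degree $k$ and all gradients of $\Psi$ are polynomials of degree at most $k$. Applying effective real quantifier elimination (see Appendix \ref{realAlgGeoAppendix}), one obtains an equivalent quantifier-free formula in $(\tilde x, \tilde r)$ whose polynomial atoms have degree $k^{O(1)}$, so $Q(\Gamma)$ is semi-algebraic of complexity $O(k^C)$. Its topological boundary, being semi-algebraic of dimension $\leq 2$ in $\RR^3$, is contained in the zero set $S_\Gamma$ of the product of the polynomials appearing in the quantifier-free description; this $S_\Gamma$ is a $2$-dimensional real algebraic variety of complexity $O(k^C)$, as required. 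The principal technical obstacle is securing the \emph{polynomial}-in-$k$ complexity bound rather than the exponential bound given by naive Tarski--Seidenberg; this is what forces the appeal to the quantitative Basu--Pollack--Roy quantifier elimination bounds. The remaining steps are a routine implicit function theorem argument and elementary Lipschitz estimates using the cinematic curvature lower bound on $|\nabla_y\Psi|$.
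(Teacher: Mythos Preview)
Your construction is essentially the same as the paper's: define a semi-algebraic set in the extended variables encoding ``there is a point on $\Gamma$ near a point on $\tilde\Gamma$ where the normals are nearly parallel,'' project to $(\tilde x,\tilde r)$ via effective Tarski--Seidenberg, and bound the boundary by a degree-$k^{O(1)}$ hypersurface. The paper carries \emph{two} auxiliary points $y\in\Gamma$ and $\tilde y\in\tilde\Gamma$ (with $|y-\tilde y|<\delta$ and a wedge-product condition $\|\nabla_y\Psi(x_0,y)\wedge\nabla_y\Psi(\tilde x_0,\tilde y)\|^2<4\delta^2\|\cdot\|^2\|\cdot\|^2$), so that Property~(ii) is read off directly from the definition of $\Delta$ without any analytic work; you instead carry only $y\in\Gamma$ and replace the second point by the scalar condition $|\Psi(\tilde x,y)-\tilde r|\le C_1\delta$, which obliges you to invoke the implicit function theorem to recover $\tilde y$ in the forward direction. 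This trades two fewer variables in the projection for a short analytic argument; both are fine. For $S_\Gamma$ the paper takes the Zariski closure of $\bdry Q(\Gamma)$, whereas you take the zero set of the product of the atomic polynomials in the quantifier-free description; again, either works and gives the same $O(k^C)$ bound.

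One small point to tighten: in your forward direction you need $\tilde y\in B(0,\alpha)$ to witness the infimum in \eqref{defOfDeltaX}, but your witness $y$ is allowed to lie in $\overline{B(0,\alpha)}$, so the implicitly-constructed $\tilde y$ could escape the open ball by $O(\delta)$. The paper sidesteps this by writing the constraint $\|\tilde y\|^2<\alpha^2$ directly into the defining set; you can do the same, or simply restrict your existential $y$ to a slightly smaller ball.
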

\begin{rem}
Informally, $Q(\Gamma)$ can be understood as follows. If $\gamma_1=C(x_1,r_1),$ $\gamma_2=C(x_2,r_2)$ are two circles, then $\gamma_1$ and $\gamma_2$ are tangent if and only if $(x_2,r_2)$ lies on the right-angled light-cone $Z_{\gamma_1}=\{(y,t)\colon |r-t| = \norm{x-y}\}$, and $\gamma_1$ and $\gamma_2$ are almost tangent if $(x_2,r_2)$ lies in the $\delta$--neighborhood of $Z_{\gamma_1}$. $Q(\Gamma)$ is the analogue of the $\delta$--neighborhood of the light cone $Z_{\gamma_1}$ for general curves $\Gamma$.
\end{rem}
\begin{proof}
Define
\begin{equation}
V_{\Gamma}=V_{1,\Gamma}\cap V_{2,\Gamma}\cap V_{3,\Gamma}\cap V_{4,\Gamma},
\end{equation}
where
\begin{align*}
V_{1,\Gamma}=&\ \{(\tilde x_0,\tilde r_0,y,\tilde y)\colon \norm{\tilde x_0}^2<\alpha^2,\ 0<1-\tilde r_0<\tau,\ \norm{y}^2<\alpha^2,\  \norm{\tilde y}^2<\alpha^2\},\\
V_{2,\Gamma}=&\ \{(\tilde x_0,\tilde r_0,y,\tilde y)\colon\Psi(x_0,y)=r_0,\ \Psi(\tilde x_0,\tilde y)=\tilde r_0\},\\
V_{3,\Gamma}=&\ \{(\tilde x_0,\tilde r_0,y,\tilde y)\colon \norm{y-\tilde y}^2 < \delta^2\},\\
V_{4,\Gamma}=&\ \{(\tilde x_0,\tilde r_0,y,\tilde y)\colon \norm{\nabla_y \Psi(x_0,y) \wedge \nabla_y \Psi(\tilde x_0,\tilde y)}^2 \\
&\phantom{\{(\tilde x_0,\tilde r_0,y,\tilde y)\colon \norm{\nabla_y \Psi(x_0,y)}} <4\delta^2 \norm{\nabla_y \Psi(x_0,y)}^2\norm{\nabla_y \Psi(\tilde x_0,\tilde y)}^2\}.
\end{align*}
Each $V_{j,\Gamma},\ j=1,2,3,4$ is a semi-algebraic set of complexity $O(k^C)$ (see Appendix \ref{realAlgGeoAppendix} for relevant definitions), and thus so is $V_{\Gamma}.$ Let
\begin{equation}
Q(\Gamma)=(\pi_{(\tilde x_0,\tilde r_0)}V_{\Gamma})\cap\{\tilde x_0\colon \norm{x_0-\tilde x_0}^2>A^2\delta^2\},
\end{equation}
where $\pi_{(\tilde x_0,\tilde r_0)}\colon (\tilde x_0,\tilde r_0,y,\tilde y)\mapsto (\tilde x_0,\tilde r_0)$ is the projection map.

An examination of the definition of $\Delta(\Gamma,\tilde\Gamma)$ verifies that $Q(\Gamma)$ satisfies Property \ref{sandwichProperty2}, so all that remains is to verify Property \ref{sandwichProperty1}. Since $V_{\Gamma}$ is a semi-algebraic set of complexity $O(k^C)$, by the Tarski-Seidenberg theorem (see Proposition \ref{effectiveTarskiProp} in Appendix \ref{realAlgGeoAppendix}), so is $Q(\Gamma)$. Thus by Proposition  \ref{propertiesOfTheBoundaryProp} in Appendix \ref{realAlgGeoAppendix}, either $Q(\Gamma)$ is empty or $\bdry(Q(\Gamma))$ has dimension at most 2 and complexity $O(k^C)$, so by Proposition \ref{propertiesOfZariskiClosureProp}, its Zariski closure, $Z_\Gamma=\zar(\bdry(Q(\Gamma))),$ is an algebraic set of dimension at most 2 and degree $O(k^C)$. If $\dim(Z_\Gamma)=2$ then let $S_\Gamma=Z_\Gamma$. If not, we can find an algebraic set of dimension 2 containing $Z_\Gamma$ whose degree is controlled by a polynomial function of the degree of $Z_\Gamma$ and we shall let this set be $S_\Gamma$.
\end{proof}

\begin{defn}
 Let $\WC\subset\RR^N$ be a finite collection of points. We say that $\WC$ is \emph{hypersurface generic} if for every polynomial $P\in\RR[x_1,\ldots,x_N]$ of degree $D$ we have
\begin{equation*}
 \#(\{P=0\}\cap\WC)\leq\binom{D}{N}-1.
 \end{equation*}
\end{defn}
\begin{lem}
 Let $\WC\subset\RR^3$ be finite. Then after an infinitesimal perturbation, $\WC$ is hypersurface generic
\end{lem}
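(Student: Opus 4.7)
The plan is a standard genericity-by-perturbation argument in the configuration space $(\RR^3)^n$, where $n=\#\WC$: identify each arrangement of $\WC$ with a point in this space, and show that the non-generic locus is contained in a proper Zariski-closed subset, so that perturbing off it is possible.

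Fix $D\geq 1$ and set $k=\binom{D}{3}$, the threshold from the definition. For each subset $I\subset\{1,\ldots,n\}$ with $|I|=k$, let $B_{D,I}\subset(\RR^3)^n$ denote the locus of configurations where the $k$ points $\{p_i\}_{i\in I}$ lie on a common polynomial of degree $\leq D$. This happens precisely when the $k\times\binom{D+3}{3}$ matrix $M_I$ whose rows are the evaluations of a basis of degree-$\leq D$ monomials at the points $p_i$, $i\in I$, has rank less than $k$, equivalently, the simultaneous vanishing of all $k\times k$ minors of $M_I$. Each such minor is a polynomial in the coordinates of the $p_i$, so $B_{D,I}$ is a real algebraic subvariety of $(\RR^3)^n$. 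To see it is a \emph{proper} subvariety, it suffices to exhibit a single $k$-tuple of points for which $M_I$ has full rank. This is a standard Lagrange-interpolation fact; one explicit choice is to take $k$ points from the tensor-product grid $\{0,1,\ldots,D\}^3$, whose evaluation matrix contains an invertible $k\times k$ submatrix by a Kronecker-product Vandermonde argument.

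Taking the finite union $B_D=\bigcup_{|I|=k} B_{D,I}$ and then $B=\bigcup_D B_D$ gives a union of proper algebraic subvarieties of $(\RR^3)^n$; the union over $D$ is effectively finite because the condition is vacuous once $\binom{D}{3}>n$. Hence $B$ is contained in a proper algebraic subvariety of $(\RR^3)^n$ and in particular has Lebesgue measure zero, so the complement is open, dense, and of full measure. Consequently, any initial $\WC$ admits arbitrarily small perturbations landing in this complement, which is precisely the hypersurface-generic condition. The only step requiring real work is the Lagrange-interpolation verification, i.e.\ the existence of some $k$-point configuration with full-rank evaluation matrix; everything else is the routine ``finite union of proper subvarieties has measure zero'' perturbation argument.
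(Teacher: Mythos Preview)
Your approach is essentially identical to the paper's: both argue that the non-hypersurface-generic locus in the configuration space $(\RR^3)^m$ is a finite union of determinantal varieties, hence a proper Zariski-closed set whose complement is open and dense. The paper's proof is a two-sentence sketch of exactly this idea; you have simply spelled out the determinantal description and added the properness check via an explicit interpolation node-set. (One caveat: the threshold $\binom{D}{N}$ in the paper's definition is evidently a typo for $\binom{D+N}{N}$; with $k=\binom{D}{3}<\binom{D+3}{3}$ your $k\times\binom{D+3}{3}$ matrix $M_I$ always has a nontrivial kernel, so the equivalence ``rank $<k$ $\Leftrightarrow$ the $k$ points lie on a common degree-$D$ hypersurface'' only holds once this is corrected to $k=\binom{D+3}{3}$, making $M_I$ square.)
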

\begin{proof}
Identify the space of all sets $H\subset\RR^3$ of cardinality $\ell$ with $(\RR^N)^\ell$. Let $\#\WC=m$. Then the subset of $(\RR^N)^{m}$ corresponding to sets of cardinality $m$ that are not hypersurface generic is Zariski closed---it is a finite union of determinantal varieties.
\end{proof}

We shall now recall a corollary of the discrete polynomial ham sandwich theorem. A proof of this theorem (and of the corollary) can be found in \cite[Theorem 4.1]{Guth2}.

\begin{prop}[Polynomial cell decomposition]\label{discPolyHamSandThm}
Let $\mathcal W\subset\RR^N$ be a collection of points. Then for each $D>0$, there exists a polynomial $P\in\RR[x_1,\ldots,x_N]$ of degree at most $D$ such that $\RR^N\backslash\{P=0\}$ is a union of $\lesssim D^N$ open connected sets (henceforth ``cells''), and for each cell $\Omega,$ we have
\begin{equation}
\#(\mathcal W \cap\Omega)\lesssim \#\mathcal W/D^N.
\end{equation}
\end{prop}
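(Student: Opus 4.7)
The plan is to derive the cell decomposition from the polynomial ham sandwich theorem by iterated bisection, following the Guth--Katz polynomial partitioning paradigm. Recall the polynomial ham sandwich theorem (a consequence of Borsuk--Ulam applied to the Veronese embedding $\RR^N\hookrightarrow\RR^{\binom{D+N}{N}-1}$ sending $x$ to the vector of monomials of degree at most $D$): given finite sets $S_1,\ldots,S_m\subset\RR^N$ with $m\leq\binom{D+N}{N}-1$, there is a nonzero polynomial of degree at most $D$ whose zero set simultaneously bisects each $S_i$, meaning that each of the two open sign regions meets $S_i$ in at most $|S_i|/2$ points.

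To build the decomposition, I would iterate. At stage $j\geq 0$ maintain a partition of $\mathcal W\setminus\{P_1\cdots P_j=0\}$ into at most $2^j$ classes, each of cardinality at most $2^{-j}\#\mathcal W$. To pass from stage $j$ to stage $j+1$, choose the smallest integer $d_{j+1}$ with $\binom{d_{j+1}+N}{N}-1\geq 2^j$, so that $d_{j+1}\lesssim 2^{j/N}$. Apply the polynomial ham sandwich theorem to the $2^j$ current classes simultaneously to obtain a polynomial $P_{j+1}$ of degree at most $d_{j+1}$ that bisects each class. The sign pattern of $P_{j+1}$ then refines the partition to at most $2^{j+1}$ classes, each of cardinality at most $2^{-(j+1)}\#\mathcal W$, with the newly discarded points lying on $\{P_{j+1}=0\}$.

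After $J$ stages, set $P=P_1 P_2\cdots P_J$, so that $\deg P=\sum_{j=1}^J d_j\lesssim 2^{J/N}$. By the Oleinik--Petrovsky--Milnor--Thom theorem, the number of connected components of $\RR^N\setminus\{P=0\}$ is $O((\deg P)^N)\lesssim 2^J$. Each such cell lies inside a single sign-pattern class of $(P_1,\ldots,P_J)$ and hence contains at most $2^{-J}\#\mathcal W$ points of $\mathcal W$. Choosing $J$ so that $2^{J/N}\asymp D$ gives $\deg P\lesssim D$, at most $\lesssim D^N$ cells, and at most $\#\mathcal W/D^N$ points per cell, which is the desired statement.

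The main ``obstacle,'' such as it is, is purely bookkeeping: accounting for points that fall exactly on an intermediate zero set. Any such point lies on $\{P=0\}$ in the final product and therefore belongs to no cell of $\RR^N\setminus\{P=0\}$, so it costs nothing and never has to be tracked further. Since this is the standard Guth--Katz polynomial partitioning statement, in practice I would simply cite \cite{Guth2} for a fully worked-out proof rather than reproduce the argument above.
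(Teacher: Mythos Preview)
Your proposal is correct and matches the paper's treatment: the paper does not give an in-text proof but simply cites \cite[Theorem~4.1]{Guth2} for the Guth--Katz polynomial partitioning statement, which is exactly the iterated ham-sandwich argument you outlined. Your sketch is the standard one and there is nothing to add.
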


\subsection{Proof of Lemma \ref{thinkOfNameForLem1}}
In order to prove Lemma \ref{thinkOfNameForLem1}, it suffices to consider the case where $\mu=\nu=1$ and establish the following bound:

\begin{lem}\label{muNuEqualsOneCase}
Let $\WB$ be as in Lemma \ref{thinkOfNameForLem1}. Then for all $\epsilon>0$, there exists a constant $C_\epsilon$ such that
\begin{equation}\label{biPartitePairRectControlSpecialCase}
\rect(\WC,\BC)\leq C_\epsilon k^{C_\epsilon}(mn)^\epsilon ((mn)^{3/4}+m+n).
\end{equation}

\end{lem}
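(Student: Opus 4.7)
The plan is to establish \eqref{biPartitePairRectControlSpecialCase} by induction on $m+n$. The base case, when $m+n \le N_0(\epsilon,k)$, is handled trivially by the bound $\rect \lesssim mn$ (or by Lemma \ref{canhamLem}) once $C_\epsilon$ is chosen large enough. For the inductive step, I first perturb $\WC \subset \RR^3$ infinitesimally to make it hypersurface generic---this is harmless because all relevant near-tangency relations are robust at scale $\delta$. I then apply Proposition \ref{discPolyHamSandThm} with a degree parameter $D = \lfloor (mn)^{\epsilon/C'}\rfloor$ for a large absolute constant $C'$. This yields a polynomial $P$ of degree $\le D$ whose zero set partitions $\RR^3$ into $\lesssim D^3$ open cells $\Omega_i$ with $|\WC \cap \Omega_i| \lesssim m/D^3$, and by hypersurface genericity $|\WC \cap Z(P)| \lesssim D^3$.

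Let $\WC_0 = \WC \cap Z(P)$, $\WC_i = \WC \cap \Omega_i$, and $\BC_i = \{\tilde\Gamma \in \BC : Q(\tilde\Gamma) \cap \Omega_i \neq \emptyset\}$. By Property \ref{sandwichProperty2} of Lemma \ref{constructionOfSandwich}, and using the hypothesis $t > C\delta$ built into a $\dt$--bipartite pair (which guarantees $d(\Gamma,\tilde\Gamma) > A\delta$ for all relevant pairs), every rectangle counted by $\rect(\WC,\BC)$ corresponds to a pair $(\Gamma,\tilde\Gamma)$ with $\Gamma \in Q(\tilde\Gamma)$. Therefore,
$$\rect(\WC,\BC) \lesssim \rect(\WC_0,\BC) + \sum_i \rect(\WC_i,\BC_i).$$
The $\WC_0$ term is handled directly by Lemma \ref{canhamLem} applied with $|\WC_0| \lesssim D^3$, giving $\lesssim nD^2 + D^3\log n$, which is absorbed into the target bound thanks to the $(mn)^\epsilon$ slack. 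For the cell contributions, the induction hypothesis applies because $|\WC_i| + |\BC_i| \le m/D^3 + n < m+n$ when $D \ge 2$.

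The key estimate is $\sum_i |\BC_i| \lesssim n D^2 k^C$. Granting this, H\"older's inequality together with $|\WC_i| \lesssim m/D^3$ gives
$$\sum_i (|\WC_i||\BC_i|)^{3/4} \le (m/D^3)^{3/4} (D^3)^{1/4} \bigl(\sum_i |\BC_i|\bigr)^{3/4} \lesssim k^{3C/4} (mn)^{3/4},$$
while the linear sums are $\sum_i |\WC_i| = m$ and $\sum_i |\BC_i| \lesssim nD^2 k^C$. Feeding in the inductive bound $C_\epsilon k^{C_\epsilon} (|\WC_i||\BC_i|)^\epsilon\bigl((|\WC_i||\BC_i|)^{3/4} + |\WC_i| + |\BC_i|\bigr)$ and summing, each of the three terms is controlled by $C_\epsilon k^{C_\epsilon}(mn)^\epsilon\bigl((mn)^{3/4} + m + n\bigr)$: the $k^{3C/4}$ and $D^2 k^C$ inflations are absorbed into $k^{C_\epsilon}$ and $(mn)^\epsilon$ respectively by taking $C'$ large and $C_\epsilon \gtrsim C/\epsilon$. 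Iterating halves $m$ by a factor $D^3 = (mn)^{3\epsilon/C'}$ per step, so the recursion terminates in $O(1/\epsilon)$ levels, producing the final prefactor $k^{O(C/\epsilon)}$.

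The main obstacle I anticipate is justifying the cell-intersection bound $\sum_i |\BC_i| \lesssim nD^2 k^C$, i.e.\ that each semi-algebraic set $Q(\tilde\Gamma)$ meets at most $O(D^2 k^C)$ of the $P$-cells. Controlling cells whose boundary is crossed by the degree-$O(k^C)$ surface $S_{\tilde\Gamma}$ is standard, but cells \emph{fully contained} in the thin neighborhood $Q(\tilde\Gamma)$ must also be counted. The way to handle these is to consider the refined decomposition of $\RR^3$ cut by $Z(P)$ together with a polynomial cutting out $S_{\tilde\Gamma}$; by Milnor--Thom the number of connected components is $O((D+k^C)^3)$, and each $P$-cell that $Q(\tilde\Gamma)$ meets---whether by boundary crossing or by being fully contained---corresponds to at least one new component appearing when $S_{\tilde\Gamma}$ is added, yielding the bound $O((D+k^C)^3 - D^3) = O(D^2 k^C)$ for $D \gtrsim k^C$.
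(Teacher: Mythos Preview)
Your overall architecture is close to the paper's, but the proposed fix for the ``fully contained'' cells is wrong, and this is the heart of the matter. If a $P$-cell $\Omega$ is entirely contained in $Q(\tilde\Gamma)$, then by definition $\bdry Q(\tilde\Gamma)\cap\Omega=\emptyset$, and there is no reason the larger algebraic set $S_{\tilde\Gamma}\supset\bdry Q(\tilde\Gamma)$ must meet $\Omega$ either. When it does not, adding $S_{\tilde\Gamma}$ to the arrangement creates \emph{no} new connected component inside $\Omega$: the cell remains a single piece of $\RR^3\setminus(\{P=0\}\cup S_{\tilde\Gamma})$. Hence the difference-of-components count $O((D+k^C)^3-D^3)$ simply does not see such cells, and the claimed bound $\sum_i|\BC_i|\lesssim nD^2k^C$ is unproven. (Even for the boundary-crossing cells your formulation is off: meeting $S_{\tilde\Gamma}$ need not disconnect $\Omega$. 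The correct count there is the number of connected components of $S_{\tilde\Gamma}\setminus\{P=0\}$, which is $O(k^CD^2)$ by the Barone--Basu refinement of Milnor--Thom; this is what the paper uses for the $\BC_1^\Omega$ contribution.)

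The paper closes this gap with an ingredient you omitted. Before partitioning, it applies Lemma~\ref{lemma115} with threshold $\mu_0=(mn)^{1/4}$ to peel off the cluster portions of both families, leaving $(\WC_g,\BC_g)$ with no $(\gtrsim\mu_0,\gtrsim1)$ or $(\gtrsim1,\gtrsim\mu_0)$ rectangles. This unlocks the incidence bound of Lemma~\ref{lemma111}. Now if $\Omega\subset Q(\Gamma)$ for some $\Gamma\in\BC_g$, then every $\tilde\Gamma\in\WC_g\cap\Omega$ is nearly tangent to $\Gamma$, so each such $\Gamma$ contributes $\gtrsim mD^{-3}$ incidences to $\incidences(\WC_g,\BC_g)$. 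Comparing with the upper bound from Lemma~\ref{lemma111} forces either $\#\WC_g<m/1000$ or $\#\bigl(\bigcup_\Omega\BC_2^\Omega\bigr)<n/1000$ (Lemma~\ref{BC2SmallOrWcgSmall}), and in either case one can invoke the induction hypothesis directly on that smaller configuration. Without the preliminary good/bad decomposition you have no control on $\incidences(\WC,\BC)$, so nothing prevents essentially every $\tilde\Gamma\in\BC$ from owning many fully-contained cells, and the induction cannot close.
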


To obtain \eqref{SchlagFood} from \eqref{biPartitePairRectControlSpecialCase} we apply a random sampling argument. The details can be found in \cite[p1253]{Wolff4}, so we shall not reproduce them here.
\begin{proof}[Proof of Lemma \ref{muNuEqualsOneCase}]
We shall proceed by induction on the quantity $(\#\WC)(\#\BC)$.
To handle the base case, we may assume
\begin{equation}\label{mIsBig}
mn> C_\epsilon k^{C_\epsilon},
\end{equation}
since otherwise we can use the trivial bound $\rect(\WC,\BC) \lesssim mn$. Now suppose Lemma \ref{muNuEqualsOneCase} has been established for all $\dt$--bipartite pairs $(\WC^\prime,\BC^\prime)$ with $(\#\WC^\prime)(\#\BC^\prime)<mn$.

We may assume
\begin{equation}\label{mAndNNotTooDifferent}
 Am^{1/3+\epsilon}<n<m,
\end{equation}
for a large constant $A$ (depending on $\epsilon$) to be determined later, since if the first inequality fails then the result follows from \eqref{CanhamThreshhold} (and after selecting a sufficiently large value of $C_\epsilon$, depending on $A$), while if the second inequality fails we can reverse the roles of $\WC$ and $\BC$.

Let $\mu_0 = (mn)^{1/4},$ and use Lemma \ref{lemma115} to write $\WC=\WC_g\sqcup\WC_b$ and similarly $\BC=\BC_g\sqcup\BC_b$. Using Lemma \ref{lem41}, we have
\begin{align}
\rect(\WC_g,\BC)&\leq\frac{1}{100}(mn)^{3/4+\epsilon},\\
\rect(\WC,\BC_g)&\leq\frac{1}{100}(mn)^{3/4+\epsilon}.
\end{align}
See \cite[p1251-2]{Wolff4} for details. Thus in order to prove Lemma \ref{muNuEqualsOneCase}, it suffices to establish the following bound:
\begin{equation}\label{controlOfWgBg}
\rect(\WC_g,\BC_g)<\frac{1}{2}C_\epsilon K^{C_\epsilon}(mn)^{3/4+\epsilon}+C_\epsilon K^{C_\epsilon}(mn)^{\epsilon}(m+n).
\end{equation}

Use Proposition \ref{discPolyHamSandThm} to select a polynomial $P\in\RR[x_1,x_2,r]$ of degree at most $D$ ($D$ shall be chosen later, but it should be thought of as $\delta^{\epsilon}$) so that the set $\RR^3\backslash\{P=0\}$ is a union of $\lesssim D^3$ cells, each of which contains $\lesssim \#\WC_g /D^3$ $\Psi$--circles $\Gamma\in\WC_g$.
\begin{lem}\label{cellMeetsBoundaryOrIsContained}
Let $\Omega$ be a cell from the above decomposition. If $\Gamma\in\BC_g,\ \tilde\Gamma\in\Omega$, and $\Delta(\Gamma,\tilde\Gamma)\leq\delta,$ then at least one of the following must hold.
\begin{enumerate}[label=(\roman{*}), ref=(\roman{*})]
 \item\label{cellMeetsBoundaryOrIsContainedItemOne} $\bdry Q(\Gamma)\cap\Omega\neq\emptyset$.
 \item\label{cellMeetsBoundaryOrIsContainedItemTwo} $\Omega\subset Q(\Gamma)$.
\end{enumerate}
\end{lem}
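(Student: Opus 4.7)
My plan is a pure topological argument that leverages the connectedness of each cell $\Omega$ together with the key semi-algebraic property of $Q(\Gamma)$.

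First, I would apply Property \ref{sandwichProperty2} of Lemma \ref{constructionOfSandwich} in its ``converse'' direction to obtain $\tilde\Gamma\in Q(\Gamma)$. To invoke this direction one needs $d(\Gamma,\tilde\Gamma)>A\delta$; in the present setting this is free, since $\Gamma\in\BC_g$, $\tilde\Gamma\in\WC_g$, and $\WB$ is a $\dt$--bipartite pair with $t>C\delta$ for an appropriate constant $C$ by the hypothesis of Proposition \ref{schlagsThm}\ref{schlagThmItemTwo}. Choosing the separation constant in the bipartite condition at least $A$ gives $d(\Gamma,\tilde\Gamma)\in(t,2t)>A\delta$, and the inequality $\Delta(\Gamma,\tilde\Gamma)\leq\delta$ then yields $\tilde\Gamma\in Q(\Gamma)$ (after a harmless tightening of the constant, should one want $<\delta$ strict). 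In particular, $\tilde\Gamma\in Q(\Gamma)\cap\Omega$, so $\Omega$ meets $Q(\Gamma)$.

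Next, I would split into cases on whether $\Omega\subseteq Q(\Gamma)$. If it is, conclusion \ref{cellMeetsBoundaryOrIsContainedItemTwo} holds and we are done. Otherwise, pick $p\in\Omega\setminus Q(\Gamma)$ and suppose for contradiction that $\bdry Q(\Gamma)\cap\Omega=\emptyset$. Since $p\notin Q(\Gamma)$ we have $p\notin\mathrm{int}(Q(\Gamma))$, so the contradiction hypothesis forces $p\notin\overline{Q(\Gamma)}$; similarly $\tilde\Gamma\in Q(\Gamma)\setminus\bdry Q(\Gamma)=\mathrm{int}(Q(\Gamma))$. This gives the decomposition
\[
\Omega \;=\; \bigl(\Omega\cap\mathrm{int}(Q(\Gamma))\bigr)\;\sqcup\;\bigl(\Omega\setminus\overline{Q(\Gamma)}\bigr),
\]
a disjoint union of two sets that are open in $\Omega$, with $\tilde\Gamma$ and $p$ witnessing that both pieces are nonempty. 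This contradicts connectedness of $\Omega$, which is guaranteed by Proposition \ref{discPolyHamSandThm} (each cell is an open connected component of $\RR^3\setminus\{P=0\}$).

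The only real obstacle I anticipate is bookkeeping around the separation hypothesis $d(\Gamma,\tilde\Gamma)>A\delta$ needed to apply Lemma \ref{constructionOfSandwich}\ref{sandwichProperty2} and actually place $\tilde\Gamma$ inside $Q(\Gamma)$; once that membership is secured, the argument is a completely standard open-closed partition applied to a connected set, and no geometric or algebraic information about the specific semi-algebraic structure of $Q(\Gamma)$ is needed beyond the definition of topological boundary.
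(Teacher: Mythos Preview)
Your argument is correct and is exactly the paper's approach: apply Property~\ref{sandwichProperty2} of Lemma~\ref{constructionOfSandwich} to place $\tilde\Gamma$ in $Q(\Gamma)$, then use that $\Omega$ is open and connected to conclude it is either contained in $Q(\Gamma)$ or meets its boundary. The paper compresses your open--closed partition into a single sentence (and, like you, tacitly reads $\bdry$ as the topological boundary in this step).
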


Indeed, since $\Delta(\Gamma,\tilde\Gamma)\leq\delta,$ by Property \ref{sandwichProperty2} of $Q(\Gamma)$ from Lemma \ref{constructionOfSandwich}, $\tilde\Gamma\in Q(\Gamma)$ and thus $\Omega\cap Q(\Gamma)\neq\emptyset$. Since $\Omega$ is an open connected set, it must either be contained in $Q(\Gamma)$ or it must meet the (topological) boundary of $Q(\Gamma)$.

Now, for each cell $\Omega$, let
\begin{equation*}
\BC_g = \BC^\Omega_1\sqcup\BC^\Omega_2\sqcup\BC^\Omega_3,
\end{equation*}
 where $\BC^\Omega_1$ (resp.~ $\BC^\Omega_2$) contains those $\Gamma\in\BC_g$ for which Item \ref{cellMeetsBoundaryOrIsContainedItemOne} (resp.~Item \ref{cellMeetsBoundaryOrIsContainedItemTwo}) occurs, and $\Gamma\in\BC^\Omega_3$ if $\Delta(\Gamma,\tilde\Gamma)>\delta$ for all $\tilde\Gamma\in\Omega$.

We shall first consider incidences involving $\BC^\Omega_2$.
\begin{lem}\label{BC2SmallOrWcgSmall}
Suppose $D$ satisfies
\begin{equation}\label{sizeOfD}
D < n^{\epsilon/6}.
\end{equation}
Then if $m$ and $n$ are sufficiently large, at least one of the following must hold:
\begin{align}
&\#\Big(\bigcup_{\Omega} \BC_2^\Omega\Big) < n/1000,\label{BC2NotTooBig}\\
&\#\WC_g < m/1000\label{BCgNotTooBig}.
\end{align}
\end{lem}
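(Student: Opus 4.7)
The plan is to establish \eqref{BC2NotTooBig} directly; then the dichotomy in the lemma's statement follows immediately (the alternative \eqref{BCgNotTooBig} will not be used in this approach). I will bound $\#\BC_2^\Omega$ cell-by-cell via a Marstrand-type three-circle estimate, then sum over the $\lesssim D^3$ cells produced by the polynomial partition.

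First, fix a cell $\Omega$ meeting the parameter space $B(0,\alpha) \times (1-\tau, 1)$; cells disjoint from this region trivially have $\BC_2^\Omega = \emptyset$. Since $\Omega$ is an open three-dimensional subset of $\RR^3$, I select three points $p_1, p_2, p_3 \in \Omega$ whose corresponding $\Psi$--circles $\tilde\Gamma_1, \tilde\Gamma_2, \tilde\Gamma_3$ are in general position (not all three tangent at a common point of $B(0,\alpha)$); the non-generic triples form a proper Zariski-closed subset of $\Omega^3$, so a generic choice always succeeds. I additionally place the $p_i$ well inside $\Omega$ so that at most $O(1)$ curves $\Gamma\in\BC_g$ satisfy $d(\Gamma, \tilde\Gamma_i) \leq A\delta$ for some $i$---possible since $\BC_g$ has $\delta$-separated radii.

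Next, for any $\Gamma \in \BC_2^\Omega$ with $d(\Gamma, \tilde\Gamma_i) > A\delta$ for each $i$, the defining property $\Omega \subset Q(\Gamma)$ forces $\tilde\Gamma_i \in Q(\Gamma)$, so Lemma~\ref{constructionOfSandwich}\ref{sandwichProperty2} yields $\Delta(\Gamma, \tilde\Gamma_i) \leq 100\delta$ for $i=1,2,3$. I then invoke the cinematic-curvature analog of Marstrand's three-circle lemma (the quantitative Apollonius-type statement referenced after Lemma~\ref{canhamLem}; cf.\ \cite{Marstrand}): given three $\Psi$--circles in general position, the set of $\Psi$--circles which are $100\delta$-near-tangent to all three is contained in $O(1)$ balls of radius $O(\delta)$ in parameter space, where the implicit constant depends only on the cinematic curvature moduli of $\Psi$---crucially, not on the algebraic degree $k$. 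Since $\BC_g$ has $\delta$-separated radii, each such ball contains $O(1)$ points of $\BC_g$, so $\#\BC_2^\Omega \lesssim 1$ (absorbing the $O(1)$ exceptional curves excluded above).

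Summing over the $\lesssim D^3$ cells gives
\[
\#\Big(\bigcup_{\Omega} \BC_2^\Omega\Big) \lesssim D^3 \leq n^{\epsilon/2},
\]
using $D < n^{\epsilon/6}$. For $n$ sufficiently large depending on $\epsilon$, this is $< n/1000$, establishing \eqref{BC2NotTooBig}. The principal obstacle is ensuring that Marstrand's constant is uniform in $k$; this is possible because the three-circle lemma's constant is governed by the cinematic curvature moduli of $\Psi$ (which are uniformly controlled by the Jackson polynomial approximation \eqref{howCloseArePhiAndPsi}) rather than by the algebraic degree. A minor technical point---the generic-position selection of the witnesses $\tilde\Gamma_i$ in each cell---is routine since non-generic triples form a lower-dimensional algebraic subset of $\Omega^3$.
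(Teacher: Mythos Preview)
Your argument has a genuine gap in the application of the Marstrand three-circle lemma. The quantitative version of that lemma does \emph{not} say that the set of $\Psi$--circles $100\delta$-near-tangent to three given circles in general position is contained in $O(1)$ balls of radius $O(\delta)$ with an absolute constant. Rather, the radius of those balls (and hence the number of $\delta$-separated circles they contain) degrades as the three test circles $\tilde\Gamma_1,\tilde\Gamma_2,\tilde\Gamma_3$ approach a degenerate configuration---in particular, as $\max_{i,j} d(\tilde\Gamma_i,\tilde\Gamma_j)\to 0$. Concretely, if the three witnesses are within $\rho$ of each other, the common near-tangent set is confined to balls of radius roughly $\delta t/\rho^2$ (or a similar expression), not $O(\delta)$; when $\rho\lesssim\delta$ the conclusion is vacuous, since being $100\delta$-near-tangent to all three is then essentially the same as being $200\delta$-near-tangent to any one of them, a two-parameter condition.

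The polynomial partition gives no lower bound on the diameter of a cell $\Omega$: a degree-$D$ polynomial can have connected complementary components of arbitrarily small size. So your witnesses $p_1,p_2,p_3\in\Omega$ may be forced to lie within an arbitrarily small ball, and ``generic'' in the Zariski sense is not the same as ``quantitatively transverse.'' Consequently the bound $\#\BC_2^\Omega\lesssim 1$ is unjustified, and the argument cannot be repaired by choosing the $p_i$ from $\WC_g\cap\Omega$ either, since those points need only be $\delta$-separated in the $r$-coordinate. The paper instead argues by contradiction: assuming $\#\WC_g\geq m/1000$, each cell contains $\gtrsim mD^{-3}$ curves of $\WC_g$, so every $\Gamma\in\bigcup_\Omega\BC_2^\Omega$ is near-tangent to that many white curves; comparing the resulting lower bound on $\incidences(\WC_g,\BC_g)$ with the upper bound from Lemma~\ref{lemma111} (which uses the cluster-removal at threshold $\mu_0=(mn)^{1/4}$) forces $\#\bigl(\bigcup_\Omega\BC_2^\Omega\bigr)\lesssim D^3 n^{1-\epsilon}\log n$.
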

\begin{proof}
Suppose \eqref{BCgNotTooBig} fails. By \eqref{mAndNNotTooDifferent}, \eqref{sizeOfD}, and the fact that $\WC$ is hyperplane generic, we have that for each cell $\Omega,$
\begin{equation*}
\begin{split}
\#(\WC_g\cap\Omega)&\gtrsim \#(\WC_g)D^{-3}\\
&\gtrsim mD^{-3}.
\end{split}
\end{equation*}
Thus each $\Gamma\in\bigcup_{\Omega} \BC_2^\Omega$ is incident to $\gtrsim mD^{-3}$ $\Psi$--circles from $\WC_g$, so
\begin{equation}\label{lowerBdOnIncidences}
\incidences(\WC_g,\BC_g) \gtrsim  m D^{-3} \#\Big(\bigcup_{\Omega} \BC_2^\Omega\Big).
\end{equation}
On the other hand, by Lemma \ref{lemma111} (with $\mu_0=\nu_0=(mn)^{1/4}$),
\begin{equation}\label{upperBdOnIncidences}
\incidences(\WC_g,\BC_g) \lesssim m^{5/4}n^{1/4}\log m + m^{3/4}n^{13/12}\log n.
\end{equation}
Combining \eqref{lowerBdOnIncidences}, \eqref{upperBdOnIncidences}, and \eqref{mAndNNotTooDifferent}, we obtain
\begin{equation}
 \#\Big(\bigcup_{\Omega} \BC_2^\Omega\Big) \lesssim D^3 n^{1-\epsilon}\log n.
\end{equation}
This and \eqref{mIsBig}, \eqref{sizeOfD} gives us \eqref{BC2NotTooBig}.
\end{proof}

If either \eqref{BC2NotTooBig} or \eqref{BCgNotTooBig} holds, then we can apply the induction hypothesis to the pair $(\WC_g, \bigcup_{\Omega} \BC_2^\Omega)$ and conclude that
\begin{equation}\label{controlWCgBC2}
\begin{split}
\rect(\WC_g, \bigcup_{\Omega} \BC_2^\Omega) &\leq \frac{1}{100}C_{\epsilon}k^{C_{\epsilon}}(mn)^{\epsilon}((mn)^{3/4}+m+n)\\
&\leq \frac{1}{10}C_{\epsilon}k^{C_{\epsilon}}(mn)^{3/4+\epsilon},
\end{split}
\end{equation}
where on the second line we used \eqref{mAndNNotTooDifferent}.

\begin{rem}
Lemma \ref{BC2SmallOrWcgSmall} is an analogue of Equation (5.23) from \cite{Zahl}. In essence, both state that if $\#\WC_g$ were too big then that would force an illegally large number of incidences to occur. However, the current formulation is much simpler. In  \cite{Zahl}, the analogue of $Q(\Gamma)$ was defined differently and thus we needed statements of the form ``if two curves $\Gamma_1,\Gamma_2$ are almost tangent then after a slight perturbation they are exactly tangent.'' Making statements such as this rigorous introduced many technical difficulties that have been avoided in the present paper.
\end{rem}

We shall now control incidences involving $\BC^\Omega_2$. Let
\begin{equation*}
n_\Omega=\#\{\Gamma\in\BC_g\colon \bdry(Q(\Gamma))\cap\Omega\neq\emptyset\}.\\
\end{equation*}
Since $\bdry(Q(\Gamma))\subset S_{\Gamma}$, we have
\begin{equation*}
n_\Omega\leq \#\{\Gamma\in\BC_g\colon S_\Gamma \cap\Omega\neq\emptyset\}.\\
\end{equation*}
By a Thom-Milnor type theorem (see e.g.~\cite[Theorem 1.1]{Barone}), we have that for each $\Gamma\in\BC_g$, $S_\Gamma\backslash \{P=0\}$ contains $O(k^CD^2)$ connected components. Since the number of cells that intersect $\bdry(Q(\Gamma))$ is bounded by the number of connected components of $S_\Gamma\backslash \{P=0\}$, we have
\begin{equation}\label{ThomMilnorBound}
 \sum_\Omega n_\Omega \leq C_1 D^2 k^{C} n.
\end{equation}
Let $m_\Omega=\#(\WC_g\cap\Omega)$. Applying the induction hypothesis,
\begin{equation}\label{rectWcBc1}
\begin{split}
 \sum_{\Omega}&\rect(\WC_g\cap\Omega,\BC^{\Omega}_1)\\
&\leq k^{C_\epsilon}C_\epsilon\bigg[ \sum m_\Omega^{3/4+\epsilon} n_{\Omega}^{3/4+\epsilon} +(mn)^{\epsilon}\sum m_\Omega+ (mn)^{\epsilon}\sum n_{\Omega}\bigg] \\
&\leq C_\epsilon k^{C_\epsilon}\bigg[\Big(\sum m_\Omega^{\frac{4}{1-4\epsilon}(3/4+\epsilon)}\Big)^{\frac{1-4\epsilon}{4}}\Big(\sum n_{\Omega}\Big)^{3/4+\epsilon}  \\
&\phantom{\leq C_\epsilon k^{C_\epsilon}\bigg[} +(mn)^{\epsilon}\sum m_\Omega+ \sum (mn)^{\epsilon}n_{\Omega}\bigg]\\
&\leq C_\epsilon k^{C_\epsilon}\bigg[\Big(D^3 m^{\frac{3+4\epsilon}{1-4\epsilon}} D^{-\frac{9+12\epsilon}{1-4\epsilon}}\Big)^{\frac{1-4\epsilon}{4}}(C_1D^2k^C n)^{3/4+\epsilon}  \\
& \phantom{\leq C_\epsilon k^{C_\epsilon}\bigg[} + (mn)^{\epsilon}m + (mn)^{\epsilon}C_1D^2k^Cn\bigg]\\
&= C_\epsilon k^{C_\epsilon}(mn)^{\epsilon}\bigg[\frac{C_1(mn)^{3/4}k^C}{D^{2\epsilon}}+m+C_1D^2k^Cn\bigg].
\end{split}
\end{equation}

Finally, since the points of $\WC$ are hypersurface generic, we have that
\begin{equation*}
\#(\WC_g\cap\{P=0\})\lesssim D^3,
\end{equation*}
and thus
\begin{equation}\label{rectWcP0}
 \rect(\WC_g\cap\{P=0\},\BC_g)\leq C_2D^3 n.
\end{equation}

We have
\begin{equation}
 \rect(\WC_g,\BC_g) = \sum_{\Omega}\rect(\WC_g\cap\Omega,\BC^{\Omega}_1)+ \sum_{\Omega}\rect(\WC_g\cap\Omega,\BC^{\Omega}_2)+ \rect(\WC_g\cap\{P=0\},\BC_g).
\end{equation}

Combining \eqref{controlWCgBC2}, \eqref{rectWcBc1}, and \eqref{rectWcP0}, we conclude that there there exists an absolute constant $C_0$ such that
\begin{equation}\label{controlOfIncidences}
 \rect(\WC_g,\BC_g) \leq C_\epsilon k^{C_\epsilon} (mn)^\epsilon\Big(\frac{C_1 (mn)^{3/4}k^{C_0}}{D^{2\epsilon}} + C_2 D^3k^{C_0} n + m\Big).
\end{equation}
Now, select $D>1$ satisfying \eqref{sizeOfD} and also
\begin{align}
 \frac{C_1k^{C_0}}{D^{2\epsilon}}&<\frac{1}{100},\label{firstDcondition}\\
 C_2D^3k^{C_0}n &< \frac{(mn)^{3/4}}{100}\label{secondDcondition}.
\end{align}
The existence of such a $D$ is guaranteed by \eqref{mIsBig} and \eqref{mAndNNotTooDifferent} provided we select the constants $C_\epsilon$ (from \eqref{mIsBig}) and $A$ (from \eqref{mAndNNotTooDifferent}) to be sufficiently large (depending on the constant $C_0$ from \eqref{controlOfIncidences} and the $\epsilon$ that appears in the statement of Lemma \ref{muNuEqualsOneCase}). With such a choice of $D$, \eqref{controlOfWgBg} is satisfied. This completes the proof of Lemma \ref{muNuEqualsOneCase} and hence also Theorem \ref{mainThm}.
\end{proof}
\begin{remark}
The use of a ``low degree'' partitioning polynomial to prove incidence theorems was first introduced by Solymosi and Tao in \cite{Solymosi}. What we do here is very similar, except instead of using a bounded degree variety and the general heuristic that operations such as projection, etc.~send bounded degree varieties to bounded degree varieties, we use a variety of ``sub-polynomial'' degree, and we rely on the heuristic that projections, etc.~send sub-polynomial degree varieties to sub-polynomial degree varieties.
\end{remark}

\appendix
\section{Real algebraic geometry}\label{realAlgGeoAppendix}
We recall several facts from real algebraic geometry. See e.g.~\cite{Basu,Bochnak} for additional information.

\begin{defn}
A set $S\subset\RR^n$ is \emph{semi-algebraic} if it can be expressed in the form
\begin{equation}\label{semiAlgebraicSetDefnEqn}
S=\bigcup_{i=1}^n \{x\colon f_{i,1}(x)=0,\ldots
f_{i,\ell_i}(x)=0,g_{i,1}(x)>0,\ldots,g_{i,m_i}(x)>0\}
\end{equation}
for $\{f_{i,j}\}$ and $\{g_{i,j}\}$ polynomials.
\end{defn}
\begin{defn}
For $S$ a semi-algebraic set, the \emph{complexity} of $S$ is
\begin{equation}
\inf\Big(\sum \deg f_{i,j}+\sum\deg g_{i,j}\Big),
\end{equation}
where the infimum is taken over all representations of $S$ of the form \eqref{semiAlgebraicSetDefnEqn}.
\end{defn}

\begin{defn}
For $S$ a semi-algebraic set, we define the \emph{boundary} $\bdry(S)=\overline S\backslash S,$ where $\overline S$ is the closure of $S$ in the Euclidean topology.
\end{defn}
\begin{prop}\label{propertiesOfTheBoundaryProp}
$\bdry(S)$ is semi-algebraic, $\dim(\bdry(S))\leq\dim(S)-1$, and the complexity of $\bdry(S)$ is controlled by a polynomial function of the complexity of $S$.
\end{prop}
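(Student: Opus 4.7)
The plan is to build $\bdry(S)$ from $\overline{S}$ and $S$, then invoke effective quantifier elimination for all three conclusions at once.

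First I would write the Euclidean closure as the first-order formula
\begin{equation*}
\overline{S}=\{x\in\RR^n\colon \forall\varepsilon>0\ \exists y\in\RR^n\ (y\in S\ \wedge\ \|x-y\|^2<\varepsilon^2)\}.
\end{equation*}
Because $S$ itself admits the quantifier-free description \eqref{semiAlgebraicSetDefnEqn}, the predicate ``$y\in S$'' is a Boolean combination of polynomial equalities and inequalities whose total degree is bounded by the complexity of $S$. Thus the whole formula above is a first-order formula in the language of ordered fields whose atomic formulas have degree at most the complexity of $S$ and whose number of atomic formulas is bounded by the complexity of $S$. Applying the effective Tarski--Seidenberg projection theorem (Proposition \ref{effectiveTarskiProp} in the same appendix) once to eliminate $y$ and once to eliminate $\varepsilon$ produces a quantifier-free semi-algebraic description of $\overline{S}$ whose complexity is bounded by a polynomial in the complexity of $S$. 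Taking the Boolean difference $\overline{S}\setminus S$ only increases complexity by a polynomial factor (one simply negates the defining formula of $S$ and intersects), so $\bdry(S)$ is semi-algebraic with polynomially controlled complexity.

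For the dimension bound I would invoke the standard fact from real algebraic geometry that every semi-algebraic set admits a finite decomposition into semi-algebraic $C^\infty$-submanifolds (for instance via a cylindrical algebraic decomposition, or a Whitney stratification; see Basu--Pollack--Roy or Bochnak--Coste--Roy). Write $S=\bigsqcup_i M_i$ with each $M_i$ a smooth semi-algebraic manifold, and let $d=\dim S=\max_i \dim M_i$. Because $\overline{M_i}\setminus M_i$ is semi-algebraic of dimension strictly less than $\dim M_i$ (again a standard consequence of the curve selection lemma / cell decomposition), we have $\dim(\overline{S}\setminus S)\le\dim\bigcup_i(\overline{M_i}\setminus M_i)<d$. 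This gives $\dim\bdry(S)\le\dim(S)-1$.

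The main obstacle is the effective complexity bound: producing a quantifier-free description of $\overline{S}$ whose complexity is polynomial (rather than, say, doubly exponential) in the complexity of $S$. This is precisely what effective versions of the Tarski--Seidenberg theorem (as cited in \cite{Basu}) deliver, and since we only eliminate two real variables ($\varepsilon$ and the coordinates of $y$), the resulting degree blow-up is indeed polynomial. All other steps (Boolean operations, passage from $\overline{S}$ to $\bdry(S)$, the stratification argument for dimension) are routine consequences of the cited references, so no further quantitative work is required.
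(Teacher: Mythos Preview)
The paper does not actually prove Proposition~\ref{propertiesOfTheBoundaryProp}; it is stated without proof as one of several standard facts from real algebraic geometry, with the textbooks \cite{Basu,Bochnak} cited at the start of the appendix as general references. Your sketch therefore supplies an argument where the paper provides none, and the approach you take---expressing $\overline{S}$ by a first-order formula and invoking effective quantifier elimination for the complexity bound, then citing cell decomposition for the dimension drop---is exactly the standard route one finds in those references.

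Two small remarks. First, the phrase ``we only eliminate two real variables ($\varepsilon$ and the coordinates of $y$)'' is mis-counted: $y\in\RR^n$, so you eliminate $n+1$ variables. This does not damage the argument, since Proposition~\ref{effectiveTarskiProp} gives a bound $k^{C}$ with $C$ depending only on the ambient dimension, and iterating $n+1$ times still yields a polynomial bound in $k$ for fixed $n$. Second, in the dimension step your reduction to the strata $M_i$ relies on the fact that $\dim(\overline{M_i}\setminus M_i)<\dim M_i$ for each stratum; this is true, but note that it is essentially the same statement you are proving, specialized to cells. The non-circular content is that for a single cell of a cylindrical algebraic decomposition the frontier inequality is immediate from the explicit parametrization of the cell, which is what the references establish. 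With that caveat, your sketch is correct.
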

\begin{defn}
For $S$ a semi-algebraic set, we define its \emph{Zariski closure} $\zar(S)$ to be the closure of $S$ in the (real) Zariski topology
\end{defn}
\begin{prop}\label{propertiesOfZariskiClosureProp}$\phantom{1}$
\begin{enumerate}[label=(\roman{*}), ref=(\roman{*})]%
\item\label{propertiesOfZariskiClosurePropItem1} $\zar(S)$ is an algebraic set.
\item\label{propertiesOfZariskiClosurePropItem2} $\dim(\zar(S))=\dim(S)$.
\item\label{propertiesOfZariskiClosurePropItem3} $\deg(\zar(S))$ is bounded by a polynomial function of the complexity of $S$.
\end{enumerate}
\end{prop}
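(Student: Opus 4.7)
My plan is to handle the three parts in order of increasing substance. Part~\ref{propertiesOfZariskiClosurePropItem1} is immediate: by construction $\zar(S)$ is Zariski-closed in $\RR^n$, and since $\RR[x_1,\ldots,x_n]$ is Noetherian every Zariski-closed set is the common zero set of finitely many polynomials, hence a real algebraic set.

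For part~\ref{propertiesOfZariskiClosurePropItem3}, I would start from a representation of $S$ of the form \eqref{semiAlgebraicSetDefnEqn} whose total degree sum is within a constant of the complexity of $S$. Each basic piece $S_i=\{f_{i,1}=\cdots=f_{i,\ell_i}=0,\ g_{i,1}>0,\ldots,g_{i,m_i}>0\}$ is contained in the algebraic variety $V_i=\{f_{i,1}=\cdots=f_{i,\ell_i}=0\}$, so $S\subset\bigcup_i V_i$ and therefore $\zar(S)\subset\bigcup_i V_i$. Using the standard effective Bezout bound (see e.g.\ \cite{Basu}), an algebraic subset of $\RR^n$ cut out by polynomials of degree $\le d$ has degree at most $d^n$. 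Because the ambient dimension $n$ is a fixed constant and each $\deg f_{i,j}$ is bounded by the complexity of $S$, this yields $\deg V_i\lesssim(\mathrm{complexity})^n$, and summing over the polynomially many pieces gives $\deg\zar(S)\lesssim(\mathrm{complexity})^{n+1}$, as required.

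For part~\ref{propertiesOfZariskiClosurePropItem2}, the equality $\dim\zar(S)=\dim S$ is a standard result in real algebraic geometry (see \cite{Bochnak}). A self-contained sketch runs as follows: using cylindrical algebraic decomposition, decompose $S$ into finitely many semi-algebraic cells, each diffeomorphic to an open cube of some dimension $d_i$, so that $\dim S=\max_i d_i$. The Zariski closure of each such cell contains an open subset of a smooth $d_i$-manifold and therefore has dimension at least $d_i$, giving $\dim\zar(S)\ge\dim S$. The reverse inequality follows because a semi-algebraic set is Zariski-dense in its Zariski closure, so no irreducible component of $\zar(S)$ can have dimension strictly larger than $\dim S$ without leaving an open piece disjoint from $S$.

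The main obstacle is part~\ref{propertiesOfZariskiClosurePropItem3}, specifically securing a \emph{polynomial} (rather than merely exponential) dependence on the complexity. This matters because in the application to Lemma~\ref{constructionOfSandwich} the semi-algebraic set $V_\Gamma$ has complexity $O(k^C)$, and we need the resulting algebraic set $S_\Gamma$ to have degree of the same form; an exponential blow-up would render the Thom--Milnor estimate \eqref{ThomMilnorBound} useless and destroy the induction in Lemma~\ref{muNuEqualsOneCase}. The polynomial bound is available precisely because the ambient dimension is fixed throughout the argument.
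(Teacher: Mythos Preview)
Your handling of parts~\ref{propertiesOfZariskiClosurePropItem1} and~\ref{propertiesOfZariskiClosurePropItem2} is fine and matches the paper, which simply declares both ``standard.'' For part~\ref{propertiesOfZariskiClosurePropItem3} the paper is equally terse: it just invokes ``standard properties of the cylindrical algebraic decomposition'' and cites \cite{Basu,Bochnak}. Your route is different and more direct---you bypass CAD entirely by observing that each basic piece $S_i$ sits inside the variety $V_i$ cut out by its equalities, and then invoke a B\'ezout-type bound.

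There is, however, a genuine gap in your argument as written. From $\zar(S)\subset\bigcup_i V_i$ you pass directly to $\deg\zar(S)\le\sum_i\deg V_i$, but degree is \emph{not} monotone under inclusion when dimensions differ: a twisted cubic (degree~3) sits on a quadric surface (degree~2). What saves you is a fact you did not state: because each $S_i$ is a \emph{relatively Euclidean-open} subset of $V_i$ (the strict inequalities $g_{i,j}>0$ define an open set), any nonempty $S_i$ is Zariski-dense in a union of irreducible components of $V_i$, so $\zar(S_i)$ is exactly such a union and hence $\deg\zar(S_i)\le\deg V_i$. Combined with $\zar(S)=\bigcup_i\zar(S_i)$, this closes the gap. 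You should also note explicitly that a piece $S_i$ with no equality constraints is open in $\RR^n$, so either $\zar(S_i)=\RR^n$ (degree~1) or $S_i=\emptyset$; this is harmless but your write-up silently assumes $\ell_i\ge1$.

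Once patched, your approach is arguably cleaner than appealing to the full CAD machinery, and it makes the polynomial dependence on complexity transparent. It is also worth remarking that for the sole application in the paper (Lemma~\ref{constructionOfSandwich}), one really only needs that $\bdry Q(\Gamma)$ is \emph{contained in} a two-dimensional algebraic set of degree $O(k^C)$---exactly what your containment $\zar(S)\subset\bigcup_i V_i$ already provides---so even without the repair your argument would suffice for Theorem~\ref{mainThm}.
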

\begin{proof}
Statements \ref{propertiesOfZariskiClosurePropItem1} and \ref{propertiesOfZariskiClosurePropItem2} are standard. Statement \ref{propertiesOfZariskiClosurePropItem3} follows from the standard properties of the cylindrical algebraic decomposition (see e.g.~\cite{Basu,Bochnak}).
\end{proof}
\begin{prop}[Effective Tarski-Seidenberg Theorem \cite{Collins}]\label{effectiveTarskiProp}
Let $S\subset\RR^d$ be a semi-algebraic set of complexity $k$ and let $\pi\colon \RR^d\to\RR^{d-1}$ be the projection onto the first $d-1$ coordinates. Then $\pi(S)$ is a semi-algebraic set of complexity at most $k^C$ for some constant $C$ that depends only on $d$.
\end{prop}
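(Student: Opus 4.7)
The plan is to invoke Collins' cylindrical algebraic decomposition (CAD) construction and read off the complexity bound from its output. Recall that a CAD of $\RR^d$ adapted to a finite family $\mathcal F\subset\RR[x_1,\ldots,x_d]$ is a partition of $\RR^d$ into finitely many semi-algebraic cells on each of which every $f\in\mathcal F$ has constant sign, and such that the partition has a cylindrical structure: for each $1\leq j\leq d-1$, the projection onto the first $j$ coordinates of each cell is again a cell of an induced CAD of $\RR^j$. Given such a decomposition adapted to the polynomials appearing in a defining formula \eqref{semiAlgebraicSetDefnEqn} for $S$, the set $S$ is a union of cells, and hence its image $\pi(S)$ under the coordinate projection is a union of cells of the induced CAD of $\RR^{d-1}$. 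Each such cell is semi-algebraic with defining polynomials taken from the CAD's family in $\RR^{d-1}$, so the complexity of $\pi(S)$ is controlled by the total degree and number of polynomials in that family.

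The heart of the construction is the \emph{projection operator} $\operatorname{Proj}\colon 2^{\RR[x_1,\ldots,x_j]}\to 2^{\RR[x_1,\ldots,x_{j-1}]}$, which takes a family $\mathcal F_j$ to a family $\mathcal F_{j-1}=\operatorname{Proj}(\mathcal F_j)$ consisting of appropriate subresultants, discriminants, leading coefficients, and resultants of pairs from $\mathcal F_j$ with respect to the variable $x_j$. The crucial property, proved in \cite{Collins}, is that whenever one has a sign-invariant cell $C\subset\RR^{j-1}$ for $\mathcal F_{j-1}$, the real roots of the polynomials in $\mathcal F_j$ (viewed as polynomials in $x_j$ with coefficients depending on $(x_1,\ldots,x_{j-1})\in C$) are delineable: they vary continuously and without collision as the base point moves through $C$. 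Consequently a CAD adapted to $\mathcal F_{j-1}$ lifts to a CAD adapted to $\mathcal F_j$, and iterating this projection operator $d-1$ times starting from the defining polynomials of $S$ produces all the data needed to realize the CAD of $\RR^d$ and, in particular, its induced CAD of $\RR^{d-1}$.

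The remaining step is the complexity accounting. If $\mathcal F_j$ consists of $s_j$ polynomials of degree at most $k_j$, then standard bounds on subresultants give $s_{j-1}\leq Cs_j^2$ and $k_{j-1}\leq Ck_j^2$ for an absolute constant $C$. Starting from the at most $k$ polynomials of degree at most $k$ extracted from the representation \eqref{semiAlgebraicSetDefnEqn} that realizes the complexity, and iterating the projection operator $d-1$ times, gives polynomials in $\RR[x_1,\ldots,x_{d-1}]$ whose number and degree are each bounded by $k^{2^{d-1}}$ up to constants depending only on $d$. Since $\pi(S)$ can be expressed by a Boolean combination of sign conditions on these projected polynomials, its complexity is at most $k^{C(d)}$ for some $C(d)$ depending only on the ambient dimension, which is the desired bound.

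The main obstacle is the delineability analysis underpinning the correctness of the projection operator and the careful choice of which subresultants and auxiliary polynomials to include in $\operatorname{Proj}(\mathcal F_j)$; once this is in place, the complexity bookkeeping is straightforward. In our application $d=7$ is fixed (the ambient coordinates are $(\tilde x_0,\tilde r_0,y,\tilde y)\in\RR^7$ before projecting to $(\tilde x_0,\tilde r_0)\in\RR^3$), so the exponent $C(d)$ is an absolute constant, which is all that is needed in the proof of Lemma \ref{constructionOfSandwich}.
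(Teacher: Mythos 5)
The paper offers no proof of this proposition at all---it is quoted as a black box from Collins' work on cylindrical algebraic decomposition---and your sketch is exactly that standard CAD/quantifier-elimination argument: iterate the projection operator, use delineability to lift sign-invariant cells, and track the growth of the number and degrees of the projection polynomials, which for fixed $d$ is polynomial in $k$. Your bookkeeping is slightly loose (Collins' projection of $s_j$ polynomials of degree $k_j$ produces on the order of $s_j^2k_j$ polynomials rather than $Cs_j^2$, since each pair contributes a subresultant chain, and expressing the cells of the induced decomposition by explicit sign conditions uses the augmented family Collins constructs rather than the bare projection family), but neither point affects the conclusion that $\pi(S)$ has complexity at most $k^{C(d)}$, so this matches the cited argument.
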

\bibliographystyle{amsplain}

\end{document}